\theoremstyle{plain}
\newtheorem{thm}{\textbf{Theorem}}[section]
\newtheorem{prop}[thm]{\textbf{Proposition}}
\newtheorem{remark}{\textbf{Remark}}
\newtheorem{defn}{\textbf{Definition}}
\newtheorem{example}{\textbf{Example}}
\numberwithin{equation}{section}
\newcommand{\norm}[1]{\left|\!\left|{#1}\right|\!\right|}
\newcommand\Id{\mathrm{Id}}
\newcommand\R{\mathbb{R}}
\title[$L^{p}$ estimates for joint quasimodes]{ $L^{p}$ estimates for joint quasimodes of semiclassical pseudodifferential operators}
\author{Melissa Tacy}
\email{mtacy@maths.otago.ac.nz}
\address{Department of Mathematics and Statistics, University of Otago}
\begin{document}

  \begin{abstract}
We develop a set of $L^{p}$ estimates for functions $u$ that are a joint quasimodes (approximate eigenfunctions) of $r$ semiclassical pseudodifferential operators $p_{1}(x,hD),\dots,p_{r}(x,hD)$. This work extends Sarnak \cite{S06} and Marshall's \cite{M16} work on symmetric space to cover a more general class of manifolds/operators. \end{abstract}
\maketitle

Let $(M,g)$ be a compact, boundaryless Riemannian manifold of dimension $n$. It is well established that there is a countably infinite set of eigenfunctions
\begin{equation}\Delta u_{j}=-\lambda_{j}^{2}u_{j}\quad \lambda_{j}\to\infty\label{eigeneq}\end{equation}
which can be normalised to produce an orthonormal basis for $L^{2}(M)$. An important question arising in harmonic analysis is to quantify the degree to which  eigenfunctions can be spatially concentrated. One way to measure this concentration is to compare the $L^{p}$ norm $u_{j}$ to its $L^{2}$ norm. In 1988 Sogge \cite{S88} obtained a set of estimates
$$\norm{u_{j}}_{L^{p}}\leq C{}\lambda_{j}^{\delta(n,p)}\norm{u_{j}}_{L^{2}}$$
where $\delta(n,p)$ is given by the piecewise linear function
$$\delta(n,p)=\begin{cases}
\frac{n-1}{2}-\frac{n}{p}&\frac{2(n+1)}{n-1}\leq{}p\leq\infty\\
\frac{n-1}{4}-\frac{n-1}{2p}&2\leq{}p\leq{}\frac{2(n+1)}{n-1}.\end{cases}$$
In fact the $L^{\infty}$ estimate is a consequence of the local Weyl law (as in \cite{L52} and \cite{H68}). However interpolation between the $L^{\infty}$ bound and the trivial $L^{2}$ bound does not produce sharp estimates for any of the intermediate $L^{p}$. In comparison Sogge's estimates are known to be sharp on the sphere. The high $p$ estimates (that is $p\geq{}2(n+1)/(n-1)$) are saturated by zonal harmonics and the low $p$ estimates (that is $p\leq{}2(n+1)/(n-1)$) by highest weight harmonics. The same $L^{p}$ bounds have be shown to hold for approximate eigenfunctions of semiclassical pseudodifferential operators with Laplace-like conditions on their symbol \cite{koch} and there is a rich literature of related results considering $L^{p}$ estimates on lower dimension subsets of $M$  (see for example \cite{BGT},\cite{Hu},\cite{HTacy},\cite{tacy09},\cite{chen14}).

In his letter to Morawetz \cite{S06}, Sarnak asks about potential improvements for joint eigenfunctions of the form
$$u=\phi_{1}(x_{1})\cdots\phi_{r}(x_{r})$$
where each $\phi_{i}$ is an eigenfunction of a differential (or pseudodifferential) operator $P_{i}$ with $P_{1}=\Delta$. He notes that on $S^{2}$ the invariance of zonal harmonics under rotation around the north pole prevents any improvement to the $L^{\infty}$ estimate where the second operator is the generator of rotations about the North/South axis,
$$P_{1}=\Delta\quad{}P_{2}=\partial_{\varphi}.$$
However, under the assumption that $M$ is a rank $r$ symmetric space, he shows that there is an improvement in the $L^{\infty}$ norm,
$$\norm{u_{j}}_{L^{\infty}}\lesssim{}\lambda_{j}^{\frac{n-r}{2}}\norm{u_{j}}_{L^{2}}.$$
This result is extended by Marshall \cite{M16} to a full set of $L^{p}$ estimates, 
$$\frac{\norm{u_{j}}_{L^{p}}}{\norm{u_{j}}_{L^{2}}}\lesssim\begin{cases}
\lambda_{j}^{r\delta(n/r,p)}&p\neq{}\frac{2(n+r)}{n-r}\\
(\log(\lambda_{j}))^{1/2}\lambda_{j}^{r\delta(n/r,p)}&p=\frac{2(n+r)}{n-r}.\end{cases}$$
In the other direction Toth and Zelditch \cite{TZ1}, \cite{TZ2} and \cite{TZ} study the behaviour of joint eigenfunctions in the completely integrable setting. In this case for any $(M,g)$ non-flat there are  sequences of joint eigenfunctions with
$$\norm{u_{j}}_{L^{\infty}}\geq{}C\lambda_{j}^{\frac{1}{4}-\epsilon}\norm{u}_{L^2}$$
compared to the symmetric case where $n=r$, then
$$\norm{u}_{L^{\infty}}\leq{}C\norm{u}_{L^{2}}.$$

In this paper we address the general problem of $L^{p}$ estimates for joint eigenfunctions. In particular we consider $u$ a joint solution (or approximate solution) to $r$ semiclassical pseudodifferential equations
$$p_{i}(x,hD)u=0\quad i=1,\dots,r\leq{}n$$
that obey a joint curvature condition. If  $p_{1}(x,hD)=-h^{2}\Delta-1$ (or $p_{1}(x,hD)$ is sufficiently Laplace-like) the curvature condition holds automatically. Here we set the semiclassical parameter $h=\lambda^{-1}$ so that eigenfunctions of $\Delta$ are solutions of $p_{1}(x,hD)u=0$. We use the left quantisation 
$$p_{j}(x,hD)u=\frac{1}{(2\pi h)^{n}}\int e^{\frac{i}{h}\langle x-y,\xi\rangle}p_{j}(x,\xi)u(y)dyd\xi$$
to associate a symbol $p_{j}(x,\xi)$ with an  operator $p_{j}(x,hD)$. It is necessary to place admissibility  conditions on the $p_{i}(x,\xi)$ (discussed in Section \ref{sec:conditions}) to exclude such cases as Sarnak's counter-example on $S^{2}$. The main theorem of this paper, Theorem \ref{thm:main}, gives a full set of $L^{p}$ estimates for strong joint quasimodes (see Definition \ref{defn:strongqm} for the definition of a strong joint quasimode). Theorem \ref{thm:n=r} tells us that if $n=r$ we can in fact get uniform bounds for $\norm{u}_{L^{p}}$ for any $p$.

\begin{thm}\label{thm:main}
Let $r<n$. Suppose $u$ is a semiclassically localised,  strong joint $O_{L^{2}}(h)$ quasimode for a set of semiclassical pseudodifferential operators $p_{1}(x,hD),\dots,p_{r}(x,hD)$ where the symbols $p_{j}(x,\xi)$ obey the following admissibility conditions
\begin{enumerate}
\item For each $x_{0}$ and $j$ the set $\{\xi\mid p_{j}(x_{0},\xi)=0\}$ is a smooth hypersurface in $T^{\star}_{x_{0}}M$.
\item If $\nu_{j}(x,\xi)$ is the normal to the hypersurface $\{\xi\mid p_{j}(x,\xi)=0\}$, then $\nu_{1},\dots,\nu_{r}$ are linearly independent. 
\item There is some $j$ such that for all $x_{0}$, $\gamma S_{j}\gamma^{\star}$ is non-degenerate. Here $S_{j}$ is the shape operator associated with $\{\xi\mid p_{j}(x_{0},\xi)=0\}$ and $\gamma$ is the projection onto the tangent space of $\cap_{j}\{\xi\mid p_{j}(x_{0},\xi)=0\}$. 

\end{enumerate}
Then
$$\norm{u}_{L^{p}}\lesssim h^{-\delta(n,p,r)}\norm{u}_{L^{2}},$$
$$\delta(n,p,r)=\begin{cases}
\frac{n-r}{2}-\frac{n-r+1}{p}&\frac{2(n-r+2)}{n-r}\leq{}p\leq\infty\\
\frac{n-r}{4}-\frac{n-r}{2p}&2\leq{}p\leq{}\frac{2(n-r+2)}{n-r}.\end{cases}$$

\end{thm}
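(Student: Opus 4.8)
The plan is to run the semiclassical analogue of Sogge's argument, with the joint system $p_{1},\dots,p_{r}$ playing the role of a single Laplace-like operator in the \emph{effective dimension} $m:=n-r+1$. Note first that $\delta(n,p,r)=\delta(m,p)$ in the notation of Sogge's estimate, and that the transition exponent $\frac{2(n-r+2)}{n-r}$ is exactly the Stein--Tomas exponent $\frac{2(m+1)}{m-1}$ of $\R^{m}$. I would begin by localizing: using a pseudodifferential partition of unity, the ellipticity of $p_{1}(x,hD)$ at infinity, and the relations $p_{j}(x,hD)u=O_{L^{2}}(h)\norm{u}_{L^{2}}$, one writes $u=\sum_{\nu}\chi_{\nu}(x,hD)u+O_{L^{2}}(h^{\infty})\norm{u}_{L^{2}}$ with each $\chi_{\nu}$ supported near a point of the joint characteristic set $\Sigma:=\bigcap_{j=1}^{r}\{p_{j}=0\}$; since $\{p_{1}=0\}$ is compact this is a finite sum, so it suffices to bound one such piece $u_{\nu}$. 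To it I attach a reproducing operator $T_{h}=\tilde\chi(x,hD)\prod_{j=1}^{r}\rho\!\left(p_{j}(x,hD)/h\right)$, with $\rho\in C_{c}^{\infty}(\R)$ equal to $1$ near $0$ and $\tilde\chi$ a microlocal cutoff. By the \emph{strong} joint quasimode hypothesis $T_{h}u_{\nu}=u_{\nu}+O_{L^{2}}(h^{\infty})\norm{u}_{L^{2}}$, and since $u_{\nu}$ and the error are semiclassically localized the trivial Bernstein inequality makes the error negligible in $L^{p}$ too. Thus everything reduces to the operator bound $\norm{T_{h}}_{L^{2}\to L^{p}}\lesssim h^{-\delta(n,p,r)}$.

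That bound rests on two facts. The first is $\norm{T_{h}}_{L^{2}\to L^{2}}\lesssim 1$, which follows from the semiclassical functional calculus (or Calder\'on--Vaillancourt together with an almost-orthogonality argument) applied to the factors $\rho(p_{j}(x,hD)/h)$. The second is the pointwise kernel estimate
$$\left|K_{T_{h}}(x,y)\right|\lesssim h^{-(n-r)}\left(1+\frac{|x-y|}{h}\right)^{-\frac{n-r}{2}},\qquad |x-y|\lesssim 1,$$
with $K_{T_{h}}=O(h^{\infty})$ for $|x-y|\gtrsim 1$ --- that is, precisely the Sogge kernel bound in dimension $m=n-r+1$. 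Granting these, the full range of $L^{p}$ estimates follows from the standard interpolation machinery applied verbatim in dimension $m$: a dyadic decomposition of $K_{T_{h}}$ in $|x-y|$, a $TT^{*}$/Stein--Tomas argument for $2\le p\le\frac{2(m+1)}{m-1}$, and a stationary phase/Carleson--Sj\"olin argument for $p\ge\frac{2(m+1)}{m-1}$.

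The crux is the kernel estimate. Writing $K_{T_{h}}(x,y)=(2\pi h)^{-n}\int e^{\frac{i}{h}\langle x-y,\xi\rangle}\tilde\chi(x,\xi)\prod_{j}\rho(p_{j}(x,\xi)/h)\,d\xi$, the product confines $\xi$ to an $O(h)$-tube about the $(n-r)$-dimensional fibre $\Sigma_{x}=\{\xi:p_{j}(x,\xi)=0,\ j=1,\dots,r\}$. Admissibility condition (2) lets me introduce coordinates $\eta=(\eta',\eta'')$ on $T^{*}_{x_{0}}M$ with $\eta'=(p_{1},\dots,p_{r})$ transverse to $\Sigma_{x}$ and $\eta''$ ($n-r$ coordinates) along it; integrating out the $r$ thin directions $\eta'$ costs a harmless factor $h^{r}$ and leaves an oscillatory integral over $\eta''$ with phase $h^{-1}\langle x-y,\xi(0,\eta'')\rangle$. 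Condition (3) now enters: positivity of the second fundamental form of $\{p_{1}=0\}$, together with transversality of $\{p_{2}=0\},\dots,\{p_{r}=0\}$, forces this phase to have non-degenerate Hessian in $\eta''$ for $x-y$ in the ``good'' normal cone to $\Sigma_{x}$, so stationary phase yields the decay $(h/|x-y|)^{(n-r)/2}$ there; in the complementary $r-1$ normal directions the phase is (nearly) non-stationary and contributes no decay, but these behave like an inert $\R^{r-1}$ factor of uniformly bounded kernels, costing only $O(1)$ in every $L^{q}\to L^{q'}$ norm. Multiplying by $(2\pi h)^{-n}$ and $h^{r}$ gives the claimed bound. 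I expect this step to be the main obstacle --- specifically, checking that slicing the curved hypersurface $\{p_{1}=0\}$ by the transverse surfaces $\{p_{j}=0\}_{j\ge 2}$ genuinely retains curvature of full rank $n-r$, and organizing the inert directions so that the argument collapses cleanly onto the dimension-$m$ case; the reproducing-operator step and the $L^{2}$-boundedness of the rough cutoffs $\rho(p_{j}(x,hD)/h)$, while requiring care, are essentially known technology.
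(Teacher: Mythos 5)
Your strategy (a Sogge-style reproducing operator plus kernel estimates in the effective dimension $m=n-r+1$) is genuinely different from the paper's (factorisation of the symbols, Koch--Tataru--Zworski in dimension $n-r+1$, then Duhamel/unitary evolution in the remaining variables), but as written it has two gaps that are not just technical care. First, the reduction $T_{h}u_{\nu}=u_{\nu}+O_{L^{2}}(h^{\infty})\norm{u}_{L^{2}}$ with $T_{h}=\tilde\chi(x,hD)\prod_{j}\rho(p_{j}(x,hD)/h)$ does not follow from the hypotheses. The $p_{j}(x,hD)$ are not assumed self-adjoint or commuting, so $\rho(p_{j}(x,hD)/h)$ is not a functional-calculus object; as a quantisation, its symbol varies at scale $h$ in $\xi$, i.e.\ it sits in the critical exotic class where the semiclassical calculus (composition expansions, and even routine $L^{2}$ bounds) fails, which is exactly why the paper and \cite{koch} work with factorised evolution equations instead of $h$-scale spectral cutoffs. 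Moreover, even granting a spectral interpretation, the strong quasimode definition only gives $\norm{p_{j}^{k}u}_{L^{2}}\leq C_{k}h^{k}\norm{u}_{L^{2}}$ with constants allowed to depend on $k$; for fixed $\rho$ this yields $\norm{(1-\rho(p_{j}(x,hD)/h))u}_{L^{2}}\leq C_{k}c^{-k}\norm{u}_{L^{2}}$, which is not $o(1)$ in $h$, let alone $O(h^{\infty})$ --- and your error term is then amplified by Bernstein at cost $h^{-n(1/2-1/p)}$. In the paper the quasimode hypothesis never enters through a projector: it enters through inhomogeneous terms $E_{j}[u]$, and the ``strong'' property is consumed in inverting $p_{i}^{k}$ off the joint characteristic set and in controlling the iterated errors $E_{i_{1}}[E_{i_{2}}[\cdots]]$ of Proposition \ref{prop:timeslice}.

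Second, the displayed kernel bound is false uniformly in the direction of $x-y$: in the model $p_{1}=|\xi|^{2}-1$, $p_{j}=\xi_{j}$, taking $x-y$ along $e_{j}$ ($j\geq 2$) the phase vanishes identically on $\Sigma_{x}$, so there is no decay on scale $h$ in those conormal directions. You acknowledge this via the ``inert $\R^{r-1}$ factor'', but that tensor-product picture is literal only in the flat model; in general the sliced surfaces, the graphs $a_{j}(x,\tilde\xi)$ and the cutoff depend jointly on all variables, and disentangling the $r-1$ inert directions from the curved $m$-dimensional block while keeping sharp constants is precisely the content of the theorem. The paper does this by a coordinate choice in which each $p_{j}$ factorises as $e_{j}\cdot(\xi_{j}-a_{j})$ (Proposition \ref{prop:factor}, where one must also verify that the slices of $\{p_{1}=0\}$ retain positive-definite curvature --- the obstacle you correctly flagged), then applies the evolution estimate of Proposition \ref{prop:KTZ} with $y$ as parameters, and finally upgrades $L^{2}_{y}$ to $L^{p}_{y}$ one variable at a time by Duhamel with unitary propagators, which is where the iterated errors appear. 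Your sketch offers no substitute for that step, so the proposal as it stands does not yet constitute a proof.
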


\begin{remark}
The final assumption, which tells us about the curvature of $\cap_{j}\{\xi\mid p_{j}(x_{0},\xi)=0\}$, is necessary to produce the estimates for $2<p<\infty$. The $p=\infty$ estimate is true if only the first and second conditions on the $p_{j}(x,\xi)$ hold. It is this $p=\infty$ case that Sarnak is concerned with in \cite{S06} and connects with the sub-convex bounds considered by number theorists for example in \cite{RW}. In the case without curvature the best intermediate estimates are those given by interpolation between the $L^\infty$ estimate and the trivial $L^{2}$ estimate.
\end{remark}

\begin{figure}[h!]
\includegraphics[scale=0.5]{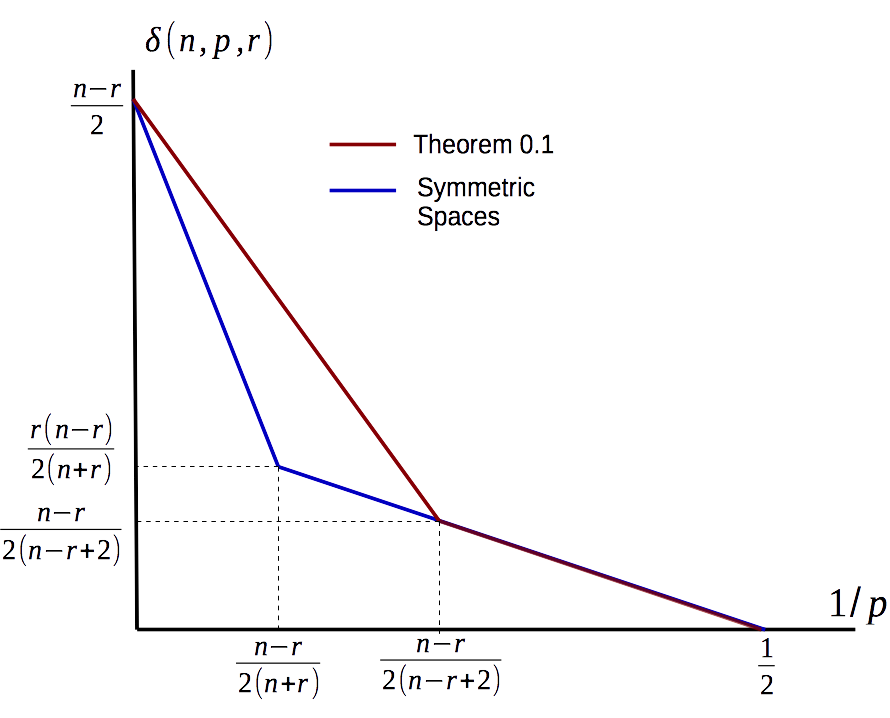}
\caption{The comparison between the results of Theorem \ref{thm:main} and those on symmetric spaces}
\label{fig:comp}
\end{figure}

\begin{remark}
Figure \ref{fig:comp} compares the results of Theorem \ref{thm:main} with Marshall and Sarnak's results on symmetric spaces. They agree for $p=\infty$ and for all $2\leq p\leq{}\frac{2(n-r+2)}{n-r}$ however in the range $\frac{2(n-r+2)}{n-r}< p<\infty$ symmetric spaces enjoy better estimates.
\end{remark}

\begin{remark}
An immediate question is whether the results of Theorem \ref{thm:main} could be improved to achieve the same results as on symmetric spaces. It is however relatively easy to construct an example that shows that Theorem \ref{thm:main} is sharp. Consider the flat model case $p_{1}(x,\xi)=|\xi|^{2}-1$ and $p_{i}(x,\xi)=\xi_{i}$ for $i=2,\dots,r$. Joint solutions to the equations $hD_{x_{i}}u=0, i=2,\dots,r$ take the form
$$u(x)=u(x_{1},x_{r+1},\dots,x_{n})$$
and therefore if $p_{1}(x,hD)u$ is an order $h$ quasimode Koch-Tataru-Zworski \cite{koch} tells us that it must satisfy Sogge's growth estimates for dimension $n-r+1$.  This then tells us that the results of Theorem \ref{thm:main} cannot be improved without further assumptions.
\end{remark}

In the case when $n=r$ Theorem \ref{thm:main} can be strengthened to say that quasimodes are uniformly bounded so long as the set of $p_{i}(x,\xi)$ obey the first two conditions (that is the curvature condition is unnecessary in this case).

\begin{thm}\label{thm:n=r}
Suppose $u$ is a semiclassically localised,  strong joint $O_{L^{2}}(h)$ quasimode for a set of semiclassical pseudodifferential operators $p_{1}(x,hD),\dots,p_{n}(x,hD)$ where the symbols $p_{j}(x,\xi)$ obey the following admissibility conditions
\begin{enumerate}
\item For each $x_{0}$ and $j$ the set $\{\xi\mid p_{j}(x_{0},\xi)=0\}$ is a smooth hypersurface in $T^{\star}_{x_{0}}M$.
\item If $\nu_{j}(x,\xi)$ is the normal to the hypersurface $\{\xi\mid p_{j}(x,\xi)=0\}$, then $\nu_{1},\dots,\nu_{r}$ are linearly independent. 
\end{enumerate}
Then
$$\norm{u}_{L^{p}}\lesssim \norm{u}_{L^{2}}$$
for any $2\leq{}p\leq{}\infty$.

\end{thm}

\begin{remark}
In both Theorem \ref{thm:main} and \ref{thm:n=r} the conditions (1)-(3) ((1)-(2) respectively) only need hold on a neighbourhood of the support of $\chi(x,\xi)$, where $\chi(x,hD)$ is the localisation operator as in Definition \ref{defn:localised}.
\end{remark}
Throughout this paper we will understand $f\lesssim g$ to mean
$$f\leq Cg$$
where $C$ is a constant that may depend on the manifold $M$ and the functions $p_{i}(x,\xi)$ but remains independent of the eigenvalue $\lambda$ (or the semiclassical parameter $h$).

\section{Quasimodes and semiclassical analysis}\label{sec:semiclassical}
We wish to study functions $u$ so that they approximately solve the equation
$$(-h^{2}\Delta-1)u=0$$
or some similar semiclassical equation. By working in coordinate charts and associating each patch with a patch on $\R^{n}$, we can write the operator $-h^{2}\Delta-1$ as a semiclassical quantisation of a symbol $p(x,\xi)$ which has principal symbol $\sigma_{p}(x,\xi)=|\xi|_{g}^{2}-1$. Here we use the left quantisation
\begin{equation}(-h^{2}\Delta-1)u=p(x,hD)u=\frac{1}{(2\pi h)^{n}}\int e^{\frac{i}{h}\langle x-y,\xi\rangle}p(x,\xi)u(y)dyd\xi.\label{pdef}\end{equation}
Since we must localise to make sense of \eqref{pdef} it is reasonable to only consider those $u$ which are semiclassically localised in phase space.

\begin{defn}\label{defn:localised}
We say that $u$ is semiclassically localised if there is a smooth, compactly supported function $\chi:T^{\star}M\to\R$ so that
$$u=\chi(x,hD)u+O(h^{\infty}).$$
\end{defn}

Throughout this paper we will use a some key standard results from semiclassical analysis. For the readers convenience we record the results here  and direct them to \cite{Zworski12} for the proofs and further discussion.

\begin{prop}[Composition of semiclassical $\Psi$DOs]\label{prop:com}
Let $p(x,hD)$, $q(x,hD)$ be left-quantised semiclassical pseudodifferential operators with symbols $p(x,\xi)$ and $q(x,\xi)$ respectively. The the symbol of $p(x,hD)\circ{}q(x,hD)$ is given by
\begin{multline}p(x,\xi)\# q(x,\xi)=e^{ih\langle{}D_{\xi},D_{y}\rangle}p(x,\xi)q(y,\eta)\Big|_{x=y,\xi=\eta}\\
  =\sum_{k}\frac{h^{k}}{k!}\left(\frac{\langle{}D_{\xi},D_{y}\rangle}{i}\right)^{k}p(x,\xi)q(y,\eta)\Big|_{x=y,\xi=\eta.}\label{semiexp}\end{multline}
  \end{prop}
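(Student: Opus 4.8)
The plan is the standard one: realise $p(x,hD)\circ q(x,hD)$ as an oscillatory integral, collapse the intermediate variables, and apply the method of stationary phase to the resulting non-degenerate quadratic phase. Composing the two operators and using that the left quantisation $p(x,hD)$ has Schwartz kernel $(2\pi h)^{-n}\int e^{\frac{i}{h}\langle x-y,\xi\rangle}p(x,\xi)\,d\xi$, one computes the kernel of the composition and, matching it against $(2\pi h)^{-n}\int e^{\frac{i}{h}\langle x-y,\eta\rangle}r(x,\eta)\,d\eta$ (the phase dependence on the innermost point variable $y$ cancels exactly), sees that $p(x,hD)\circ q(x,hD)=r(x,hD)$ with
$$r(x,\eta)=\frac{1}{(2\pi h)^{n}}\iint e^{\frac{i}{h}\langle x-z,\,\xi-\eta\rangle}\,p(x,\xi)\,q(z,\eta)\,dz\,d\xi,$$
provided this oscillatory integral is given a meaning and defines a symbol. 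Substituting $\xi=\eta+\sigma$ and $z=x+w$ puts $r$ in the model form
$$r(x,\eta)=\frac{1}{(2\pi h)^{n}}\iint e^{-\frac{i}{h}\langle w,\sigma\rangle}\,p(x,\eta+\sigma)\,q(x+w,\eta)\,dw\,d\sigma.$$

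The next step is to make this integral rigorous and read off the expansion. Since the phase $-\langle w,\sigma\rangle$ has no critical point off the origin, repeated integration by parts — exploiting that the operators $\tfrac{h}{i|\sigma|^{2}}\langle\sigma,\partial_{w}\rangle$ and $\tfrac{h}{i|w|^{2}}\langle w,\partial_{\sigma}\rangle$ reproduce the exponential — together with the symbol bounds on $p$ and $q$ (in the present context the symbols are compactly supported in frequency, so absolute convergence is automatic) shows that the integral converges, defines a symbol, and that the interchanges of integration and the change of variables used above are legitimate. The phase is the non-degenerate quadratic form whose Hessian has vanishing signature, modulus-one determinant, and coincides with its own inverse; the method of stationary phase for a global quadratic phase (equivalently, Taylor-expanding the Fourier multiplier corresponding to $e^{ih\langle D_{w},D_{\sigma}\rangle}$) then gives, for every $N$,
$$r(x,\eta)=\sum_{k<N}\frac{h^{k}}{k!}\Big(\frac{\langle D_{\sigma},D_{w}\rangle}{i}\Big)^{k}\big(p(x,\eta+\sigma)\,q(x+w,\eta)\big)\big|_{w=0,\,\sigma=0}+h^{N}E_{N}(x,\eta),$$
the remainder $E_{N}$ being controlled, in the relevant symbol seminorms, by finitely many derivatives of $p$ and $q$ via the quantitative form of stationary phase. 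Finally, because the amplitude factors as a function of $\sigma$ times a function of $w$, differentiating $p(x,\eta+\sigma)$ in $\sigma$ at $\sigma=0$ is differentiating $p$ in its frequency slot, and differentiating $q(x+w,\eta)$ in $w$ at $w=0$ is differentiating $q$ in its base slot; relabelling these slots $\xi$ and $y$ turns the expansion into $e^{ih\langle D_{\xi},D_{y}\rangle}p(x,\xi)q(y,\eta)\big|_{x=y,\,\xi=\eta}$, which is \eqref{semiexp}.

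The main obstacle is not the bookkeeping above but the analysis that supports it. The integral defining $p\#q$ is only conditionally convergent, so the collapsing of the intermediate variables, the interchange of integrals, and the change of variables all have to be justified through the integration-by-parts regularisation; and a quantitative stationary phase estimate with an explicit remainder is needed in order to conclude that the expansion is asymptotic \emph{within the symbol class}, so that $p(x,hD)\circ q(x,hD)$ again lies in the calculus. Both of these are carried out in full in \cite{Zworski12}, to which we refer for the proof.
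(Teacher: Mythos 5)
Your proposal is correct: the paper does not actually prove Proposition \ref{prop:com} — it records it as a standard fact and refers the reader to \cite{Zworski12} — and your argument (writing the composed kernel as an oscillatory integral, collapsing to the quadratic phase $-\tfrac1h\langle w,\sigma\rangle$, regularising by integration by parts, and applying quantitative stationary phase to obtain the asymptotic expansion $e^{ih\langle D_{\xi},D_{y}\rangle}p(x,\xi)q(y,\eta)|_{x=y,\xi=\eta}$ within the symbol class) is exactly the standard proof found in that reference, to which you also defer the analytic details. So the proposal is correct and takes essentially the same route as the paper.
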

  
  \begin{prop}[Commutation identity]\label{prop:commute}
  Let $p(x,hD)$, $q(x,hD)$ be left-quantised semiclassical pseudodifferential operators. Then
  $$[p(x,hD),q(x,hD)]=hr(x,hD)$$
$$\norm{r(x,hD)}_{L^{2}\to{}L^{2}}\lesssim 1.$$
\end{prop}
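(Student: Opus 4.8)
The statement follows quickly from the composition formula of Proposition \ref{prop:com} together with the semiclassical $L^{2}$-boundedness (Calderón--Vaillancourt) theorem, another of the standard facts from \cite{Zworski12} referred to above. The plan is: first apply \eqref{semiexp} to both $p(x,hD)\circ q(x,hD)$ and $q(x,hD)\circ p(x,hD)$ and subtract. The $k=0$ terms of the two expansions are $p(x,\xi)q(x,\xi)$ and $q(x,\xi)p(x,\xi)$, which cancel, so the full symbol of the commutator is divisible by $h$; write
$$[p(x,hD),q(x,hD)]=h\,r(x,hD),$$
where $r(x,\xi)$ is obtained from the remaining ($k\geq1$) terms of \eqref{semiexp} after extracting one power of $h$. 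Its leading term, coming from $k=1$, is a fixed multiple of the Poisson bracket $\{p,q\}(x,\xi)$, and every subsequent term carries at least one additional factor of $h$. The point that must be checked is that $r$ is again a bona fide symbol, lying in the same standard class as $p$ and $q$ — for the operators in this paper the symbols are smooth and, after localisation, compactly supported, which makes this immediate — with seminorms bounded uniformly for $h\in(0,1]$; this is exactly the content of the symbol calculus underpinning Proposition \ref{prop:com}.

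With $r$ so identified, I would conclude by invoking Calderón--Vaillancourt: a semiclassical pseudodifferential operator whose symbol has seminorms bounded uniformly in $h$ is bounded on $L^{2}$ with operator norm controlled by finitely many of those seminorms, uniformly in $h$. Applied to $r(x,hD)$ this gives $\norm{r(x,hD)}_{L^{2}\to L^{2}}\lesssim1$, which is the assertion. Equivalently one may write $r(x,hD)$ as a constant multiple of $\Op(\{p,q\})$ plus $h$ times $\Op(\tilde r)$ with $\tilde r$ a uniformly bounded symbol, and apply the theorem to each summand, obtaining $\norm{[p(x,hD),q(x,hD)]}_{L^{2}\to L^{2}}\leq h\,O(1)+O(h^{2})=O(h)$.

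The only genuine subtlety is that \eqref{semiexp} is an asymptotic, not a convergent, series, so the claim ``$r$ is a symbol'' has to be justified via the oscillatory-integral definition of the composition $\#$ and the stationary-phase remainder bounds (again from \cite{Zworski12}), which show that truncating after any finite number of terms leaves a remainder that is $O(h^{N})$ in symbol seminorms. Once this bookkeeping is in place the estimate is immediate; in the compactly supported setting relevant to this paper even that point is routine.
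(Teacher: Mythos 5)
Your argument is correct and is essentially the standard proof: the paper does not prove this proposition itself but defers to \cite{Zworski12}, where exactly this route is taken — the $k=0$ terms of the composition expansion cancel in the commutator, the leading $k=1$ term is a multiple of the Poisson bracket $\{p,q\}$, the remainder is controlled in symbol seminorms uniformly in $h$ via the oscillatory-integral/stationary-phase bounds you mention, and Calder\'on--Vaillancourt then gives the uniform $L^{2}$ bound. The only points to tidy are cosmetic: your header should refer to Proposition \ref{prop:commute} rather than Theorem \ref{thm:main}, and the statement tacitly assumes the symbols lie in a class (e.g.\ $S(1)$, or compactly supported after the localisation used throughout the paper) for which the seminorm bounds you invoke are available — a caveat you correctly flag.
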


\begin{prop}[Invertibility of elliptic operators]\label{prop:invert}
Let $p(x,hD)$ be a left-quantised, semiclassical pseudodifferential operator with symbol $p(x,\xi)$ such that $|p(x,\xi)|>c>0$. Then there exists an inverse operator $(p(x,hD))^{-1}$ with
$$\norm{(p(x,hD))^{-1}}_{L^{2}\to L^{2}}\lesssim 1.$$
\end{prop}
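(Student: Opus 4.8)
The plan is to construct a two-sided inverse by the classical parametrix-plus-Neumann-series argument, using only the composition formula of Proposition~\ref{prop:com} together with the ($h$-independent) $L^2\to L^2$ boundedness of pseudodifferential operators with bounded symbols that already underlies Proposition~\ref{prop:commute}.

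First I would set $q_0(x,\xi)=1/p(x,\xi)$. Since $|p(x,\xi)|\ge c>0$ and $p$ is, in the semiclassically localised setting at hand, a symbol with bounded derivatives, the quotient rule and an induction on the order of differentiation show that $q_0$ is again a symbol with all derivatives bounded; in particular $q_0(x,hD)$ is bounded on $L^2$ uniformly in $h$. Applying Proposition~\ref{prop:com} to $q_0(x,hD)\circ p(x,hD)$ then gives
$$q_0(x,\xi)\# p(x,\xi)=q_0(x,\xi)p(x,\xi)+h\,r(x,\xi)=1+h\,r(x,\xi),$$
where $r$ collects the first-order term of the expansion \eqref{semiexp} and its remainder, a finite combination of derivatives of $q_0$ and of $p$ (plus an $O(h)$ tail), all bounded with bounded derivatives. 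Hence
$$q_0(x,hD)\circ p(x,hD)=I+hR,\qquad \norm{R}_{L^2\to L^2}\lesssim 1.$$

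Now I would invert $I+hR$ by a Neumann series: for $h$ small enough that $h\norm{R}_{L^2\to L^2}\le\tfrac12$, the operator $I+hR$ is invertible on $L^2$ with $\norm{(I+hR)^{-1}}_{L^2\to L^2}\le 2$, so $(I+hR)^{-1}q_0(x,hD)$ is a left inverse of $p(x,hD)$ with $L^2\to L^2$ norm $\lesssim 1$. Running the identical computation with the factors reversed, $p(x,hD)\circ q_0(x,hD)=I+h\tilde R$ with $\norm{\tilde R}_{L^2\to L^2}\lesssim 1$, produces a right inverse of norm $\lesssim 1$. A bounded operator possessing a bounded left inverse and a bounded right inverse is invertible, its inverse coincides with each one-sided inverse, and therefore $\norm{(p(x,hD))^{-1}}_{L^2\to L^2}\lesssim 1$. (If one wanted the inverse to be itself a pseudodifferential operator one would iterate, building the full symbol $q\sim\sum_k h^k q_k$, but for the statement as phrased this single correction step is enough.)

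The only point requiring real care — and where I would expect to spend the effort — is checking that $q_0$ and the remainder symbol $r$ lie in a symbol class for which the $L^2$ boundedness applies with a constant independent of $h$; this is routine given the uniform lower bound $|p|>c$ and the control on $p$ available once we have localised, but it is what makes the Neumann step legitimate. I would also record, consistently with this paper's conventions, that the estimate is understood for $h$ in a sufficiently small interval $(0,h_0]$; no invertibility is asserted for large $h$.
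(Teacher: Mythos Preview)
Your argument is the standard parametrix-plus-Neumann-series proof and is correct as stated (with the caveats you already flag about symbol classes and the restriction to small $h$). Note, however, that the paper does not actually give its own proof of this proposition: it records the statement as a standard fact and refers the reader to \cite{Zworski12} for the argument, so there is nothing to compare against beyond observing that your construction is essentially the textbook one found there.
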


Suppose $\chi(x,\xi)$ is a smooth function and $p(x,hD)u=0$. Then from Proposition \ref{prop:commute} 
$$p(x,hD)\chi(x,hD)u=\chi(x,hD)p(x,hD)u+hr(x,hD)u=hr(x,hD)u$$
where $\norm{r(x,hD)}_{L^{2}\to{}L^{2}}\lesssim 1$. That is, the process of localisation reduces an exact solution to an approximate solution. Therefore we need to work with approximate solutions to $p(x,hD)u=0$ rather than exact ones.

\begin{defn}\label{defn:qm}
We say that $u$ is an order $h^{\beta}$ (sometimes written as $O_{L^{2}}(h^{\beta})$ or $O(h^{\beta})$) quasimode of $p(x,hD)$ if
$$\norm{p(x,hD)u}_{L^{2}}\lesssim{}h^{\beta}\norm{u}_{L^{2}}.$$
If $u$ is a joint order $h^{\beta}$ quasimode of $p_{1}(x,hD)\dots p_{r}(x,hD)$ then
$$\norm{p_{i}(x,hD)u}_{L^{2}}\lesssim{}h^{\beta}\norm{u}_{L^{2}}\quad i=1,\dots,r.$$
\end{defn}

Definition \ref{defn:qm} is enough to produce the $L^{p}$ estimates for quasimodes considered in \cite{koch}, \cite{tacy09} and \cite{HTacy}. However for this work we will need a slightly stronger kind of quasimode.  This issue arises as we could produce a quasimode $v$ from any exact solution $u$ by considering
$$v=u+hf$$
for some function $\norm{f}_{L^{2}}=1$. By choosing $f(x)=h^{-n/2}\chi(h^{-1}x)$ where $\chi$ is a compactly supported function we immediately see that we couldn't expect an $L^{\infty}$ estimate better than
$$\norm{v}_{L^{\infty}}\lesssim{}h^{-\frac{n-2}{2}}\norm{v}_{L^{2}}.$$
However this example is rather artificial. To deal with this we define the notion of a strong quasimode that has the property that repeated application of $p(x,hD)$ continues to improve the quasimode error.

\begin{defn}\label{defn:strongqm}
We say that $u$ is a strong order $h^{\beta}$ ($O^{str}_{L^{2}}(h^{\beta})$ or $O^{str}(h^{\beta})$) quasimode of $p(x,hD)$ if
$$\norm{p^{k}(x,hD)u}_{L^{2}}\lesssim{}h^{\beta k}\norm{u}_{L^{2}}\quad k=1,2,\dots$$
If $u$ is a  strong joint order $h^{\beta}$ quasimode of $p_{1}(x,hD)\dots p_{r}(x,hD)$ then
$$\norm{p^{k_{1}}_{1}(x,hD)\circ\cdots\circ p^{k_{r}}_{r}(x,hD)u}_{L^{2}}\lesssim{}h^{\beta (k_{1}+\cdots+k_{r})}\norm{u}_{L^{2}}\quad i=1,\dots, r, \; k_{i}=1,2,\dots$$
\end{defn}

Clearly an exact solution
$$p(x,hD)u=0$$
is a strong quasimode. Spectral clusters (a major example of quasimodes) are also strong quasimodes. Let
$$u=\sum_{\lambda_{j}\in[\lambda,\lambda-W(\lambda)]}\phi_{j}$$
where the $\phi_{j}$ are Laplacian eigenfunctions with eigenvalues $\lambda_{j}$ and $W(\lambda)\in[0,\lambda]$. Then
$$(-\Delta-\lambda^{2})u=\sum_{\lambda_{j}\in[\lambda,\lambda-W(\lambda)]}(\lambda_{j}-\lambda)(\lambda_{j}+\lambda)\phi_{j}.$$
So
$$\norm{-(\Delta-\lambda^{2})u}_{L^{2}}\lesssim W(\lambda)\lambda\norm{u}_{L^{2}}$$
and when rescaled to express this in terms of the semiclassical parameter $h=\lambda^{-1}$,
$$\norm{-(h^2\Delta-1)u}_{L^{2}}\lesssim W(h^{-1})h\norm{u}_{L^{2}}.$$
That is $u$ is an order $W(h^{-1})h$ quasimode. If we apply $(-\Delta-\lambda^{2})^{k}$ to $u$ we have
$$(-\Delta-\lambda^{2})^{k}u=\sum_{\lambda_{j}\in[\lambda,\lambda-W(\lambda)]}(\lambda_{j}-\lambda)^{k}(\lambda_{j}+\lambda)^{k}\phi_{j}$$
and rescaling $h=\lambda^{-1}$,
$$\norm{(-h^{2}\Delta-1)^{k}u}_{L^{2}}\lesssim W^{k}(h^{-1})h^{k}\norm{u}_{L^{2}}.$$
That is $u$ is a strong order $W(h^{-1})h$ quasimode.

We have seen that the commutation identity implies that the property of being an order $h$ quasimode is preserved under localisation.  That is if $u$ is an order $h$ quasimode of $p(x,hD)$, $\chi(x,hD)u$ is also an $O(h)$ quasimode of $p(x,hD)$. This property also holds for strong quasimodes.
\begin{prop}\label{prop:stronqm}
Suppose $u$ is a  strong joint order $h$ quasimode of $p_{1}(x,hD),\dots,p_{r}(x,hD)$  and $\chi(x,\xi)$ is a smooth compactly supported function on $T^{\star}M$. Then $\chi(x,hD)u$ is also a  strong joint order $h$ quasimode of $p_{1}(x,hD),\dots,p_{r}(x,hD)$.
\end{prop}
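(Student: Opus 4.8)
The plan is to reduce everything to the single-operator commutation identity (Proposition~\ref{prop:commute}), applied repeatedly and carefully, so that each commutator costs exactly one power of $h$ and never destroys the quasimode gain already accumulated. The key algebraic fact I would use is that for left-quantised $\Psi$DOs $A$ and $B$ one has the operator identity
\begin{equation*}
B^{m}A = \sum_{j=0}^{m}\binom{m}{j} (\mathrm{ad}_{B})^{j}(A)\, B^{m-j},
\end{equation*}
where $\mathrm{ad}_B(A)=[B,A]$, and that by Proposition~\ref{prop:commute} each factor $(\mathrm{ad}_B)^{j}(A) = h^{j} R_j(x,hD)$ with $\|R_j\|_{L^2\to L^2}\lesssim 1$ (the iterated commutator of order $j$ gains $h^{j}$; this follows by induction on $j$ from Proposition~\ref{prop:commute}, since $[B, h^{j-1}R_{j-1}(x,hD)] = h^{j} \tilde R_{j}(x,hD)$). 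I would state this as a short preliminary lemma, since it is exactly the combinatorial heart of the argument.

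First I would set $P=p(x,hD)$, $Q=q(x,hD)$, $X=\chi(x,hD)$, and fix exponents $k_1,k_2\geq 0$; the goal is to bound $\|Q^{k_1}P^{k_2} X u\|_{L^2}$. The strategy is to commute $X$ to the right past all the $P$'s and $Q$'s. Using the preliminary lemma twice, write
\begin{equation*}
Q^{k_1}P^{k_2} X = \sum_{j_1=0}^{k_1}\sum_{j_2=0}^{k_2} c_{j_1,j_2}\, h^{j_1+j_2}\, (\mathrm{ad}_Q)^{j_1}(\mathrm{ad}_P)^{j_2}(X)\, Q^{k_1-j_1}P^{k_2-j_2},
\end{equation*}
after first pulling $X$ past $P^{k_2}$ and then noting that each operator $(\mathrm{ad}_P)^{j_2}(X)$ is itself a $\Psi$DO of the form $h^{j_2}(\text{bounded }\Psi\mathrm{DO})$, and that $\mathrm{ad}_Q$ applied to it gains a further $h$ per application while the $P^{k_2-j_2}$ factor has already moved to the far right and commutes through. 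One has to be slightly careful about the order in which $Q$'s and $P$'s interleave, but since we only ever discard the precise structure of the remainder operators and keep track of their $L^2\to L^2$ norms and their powers of $h$, the bookkeeping closes: the operator multiplying $Q^{k_1-j_1}P^{k_2-j_2}$ has $L^2\to L^2$ norm $\lesssim h^{j_1+j_2}$.

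Now apply this to $u$. Each term contributes
\begin{equation*}
h^{j_1+j_2}\,\big\| (\text{bounded op})\, Q^{k_1-j_1}P^{k_2-j_2} u\big\|_{L^2}
\;\lesssim\; h^{j_1+j_2}\,\| Q^{k_1-j_1}P^{k_2-j_2} u\|_{L^2}
\;\lesssim\; h^{j_1+j_2}\, h^{(k_1-j_1)+(k_2-j_2)}\|u\|_{L^2},
\end{equation*}
using the strong joint quasimode hypothesis on $u$ for the last step (and for $j_i=k_i$, the trivial bound $\|u\|_{L^2}$, which matches $h^0$). Every term is therefore $\lesssim h^{k_1+k_2}\|u\|_{L^2}$, and summing the finitely many terms gives $\|Q^{k_1}P^{k_2} X u\|_{L^2}\lesssim h^{k_1+k_2}\|u\|_{L^2}$, which is precisely the strong joint order $h$ quasimode property for $Xu=\chi(x,hD)u$. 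Taking $k_1=0$ or $k_2=0$ recovers the single-operator statements.

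The only genuine obstacle is the first paragraph: verifying that iterated commutators of two left-quantised semiclassical $\Psi$DOs genuinely gain one power of $h$ per bracket, with uniformly $L^2$-bounded remainders. This is standard (it follows from Proposition~\ref{prop:com}, since in the Moyal expansion the $k$-th term carries $h^k$ and the $k=0$ terms cancel in a commutator), but one should note that it requires the symbols to lie in a suitable symbol class so that all the seminorms appearing are finite — here $\chi$ is smooth and compactly supported and $p,q$ are the admissible symbols of the theorem, so this is not an issue. Everything after that is the routine finite combinatorial sum described above.
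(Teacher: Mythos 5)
Your proof is correct and is essentially the paper's own argument: both amount to repeatedly applying the commutation identity to move $\chi(x,hD)$ past the powers of $p(x,hD)$ and $q(x,hD)$, obtaining a finite sum of terms of the form $h^{k_1+k_2-i-j}\,(\text{$L^2$-bounded operator})\,q^{i}(x,hD)p^{j}(x,hD)u$, and then invoking the strong joint quasimode hypothesis on each such term. Your only difference is cosmetic bookkeeping (the closed-form iterated-adjoint/binomial identity instead of peeling off one commutator at a time), and you correctly flag the same implicit point the paper uses, namely that each commutator remainder is again a semiclassical $\Psi$DO so the iteration can continue.
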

\begin{proof}
This is simply a repeated application of the commutation identity, 
\begin{align*}
p^{k_{1}}_{1}(x,hD) \circ\cdots \circ &p^{k_{r}}_{r}(x,hD)\chi(x,hD)u\\
&=p^{k_{1}}_{1}(x,hD)\circ\cdots\circ p_{r}^{k_{r}-1}(x,hD)\chi(x,hD)p_{r}(x,hD)u\\
&\qquad+hp^{k_{1}}_{1}(x,hD)\circ\cdots\circ p^{k_{r}-1}_{r}(x,hD)r_{0}(x,hD)u\\
&=p^{k_{1}}_{1}(x,hD)\circ\cdots\circ p^{k_{r}-2}_{r}(x,hD)\chi(x,hD)p^{2}_{r}(x,hD)u\\
&\qquad+hp_{1}^{k_{1}}(x,hD)\circ\cdots\circ p_{r}^{k_{r}-2}(x,hD)r_{1}(x,hD)p_{r}(x,hD)u\\
&\qquad\qquad+h^{2}p_{1}^{k_{1}}(x,hD)\circ\cdots\circ p^{k_{r}-2}_{r}(x,hD)\tilde{r}_{0}(x,hD)u\\
&\vdots\\
&=\sum_{i_{1}=1}^{k_{1}}\cdots\sum_{i_{r}=1}^{k_{r}}h^{k_{1}+k_{2}-\sum_{j=1}^{r}i_{j}}b_{i_{1},\dots,i_{r}}(x,hD)p_{1}^{i_{1}}(x,hD)\circ \cdots \circ p_{r}^{i_{r}}(x,hD)u\end{align*}
where each $b_{i_{1},\dots,i_{r}}(x,hD)$ has bounded mapping norm $L^{2}\to L^{2}$. Therefore
$$\norm{p_{1}^{k_{1}}(x,hD)\circ\cdots\circ p_{r}^{k_{r}}(x,hD)\chi(x,hD)u}_{L^{2}}\lesssim h^{k_{1}+\cdots+k_{r}}\norm{u}_{L^{2}}.$$
\end{proof}

We can use this localisation combined with invertibility properties of $p(x,hD)$ where $p(x,\xi)$ is elliptic to focus our attention of components of $u$ localised near the set 
$$\bigcap_{i=1}^{r}\{(x,\xi)\mid p_{i}(x,\xi)=0\}.$$
 From Proposition \ref{prop:invert} we know that if $|p_{i}(x,\xi)|>c>0$, the operator $p_{i}(x,hD)$ is invertible and its inverse $(p_{i}(x,hD))^{-1}$ has bounded mapping norm $L^{2}\to{}L^{2}$. Now consider $\chi(x,hD)u$ where $\chi(x,\xi)$ is supported near a point $(x_{0},\xi_{0})$ such that $p_{i}(x_{0},\xi_{0})\neq{}0$. By choosing the support of $\chi$ small enough we may assume that $p_{i}(x,\xi)$ is bounded away from zero on the support of $\chi$ and therefore so is $p^{k}_{i}(x,\xi)$. Proposition \ref{prop:com} tells us that $p^{k}_{i}(x,\xi)$ is the principal symbol of $p^{k}_{i}(x,hD)$ so by Proposition \ref{prop:invert} we can produce an inverse $(p^{k}_{i}(x,hD))^{-1}$. Therefore if 
$$p^{k}_{i}(x,hD)\chi(x,hD)u=h^{k}f,\quad\norm{f}_{L^{2}}\lesssim \norm{u}_{L^{2}}$$
we can invert $p_{i}(x,hD)$ to obtain
$$\chi(x,hD)u=h^{k}(p^{k}(x,hD))^{-1}f$$
and
 $$\norm{\chi(x,hD)u}_{L^{2}}\lesssim h^{k}\norm{u}_{L^{2}}.$$ 
 Now by applying semiclassical Sobolev estimates \cite{Zworski12} we obtain
$$\norm{\chi(x,hD)u}_{L^{p}}\lesssim{}h^{-\frac{n}{2}+\frac{n}{p}+k}\norm{u}_{L^{2}}.$$
By choosing $k$ large enough (dependent on $r$) we obtain better estimates than those of Theorem \ref{thm:main}. So we need only consider $\chi(x,hD)u$ where $\chi(x,\xi)$ is supported in a neighbourhood of some point $(x_{0},\xi_{0})$ where all the $p_{i}(x_{0},\xi_{0})=0$.

\section{Admissibility  Conditions}\label{sec:conditions}

In Theorems \ref{thm:main} and \ref{thm:n=r} we stated a set of admissibility conditions on the symbols of the operators $p_{i}(x,hD)$. This section is devoted to a discussion of the significance of these conditions. The first condition places a non-degeneracy assumption on the $p_{i}(x,\xi)$, namely that each $\{\xi\mid p(x_{0},\xi)=0\}$ is a smooth hypersurface. The second condition gives us information about how these hypersurfaces intersect.  To understand the importance of the intersection condition consider the following motivating example in $\R^{2}$ with $p_{1}(x,\xi)=|\xi|^{2}-1$.  Since this is a constant coefficient equation it is instructive to work on the Fourier side. In keeping with the semiclassical theme we use the semiclassical Fourier transform
$$\mathcal{F}_{h}[f](\xi)=\frac{1}{(2\pi h)^{n/2}}\int{}e^{-\frac{i}{h}\langle x,\xi\rangle }f(x)dx$$
where the prefactor is chosen so that $\norm{\mathcal{F}_{h}f}_{L^{2}}=\norm{f}_{L^{2}}$. Therefore to produce a strong quasimode for $p_{1}(x,hD)$ we need to solve the multiplier problem
$$(|\xi|^{2}-1)\mathcal{F}_{h}[u]=O_{L^{2}}(h\norm{\mathcal{F}_{h}[u]}_{L^{2}}).$$
Clearly any solution needs to be localised in an $h$ thickened annulus around $|\xi|=1$ (see Figure \ref{fig:FTann}).
\begin{figure}[h!]
\includegraphics[scale=0.4]{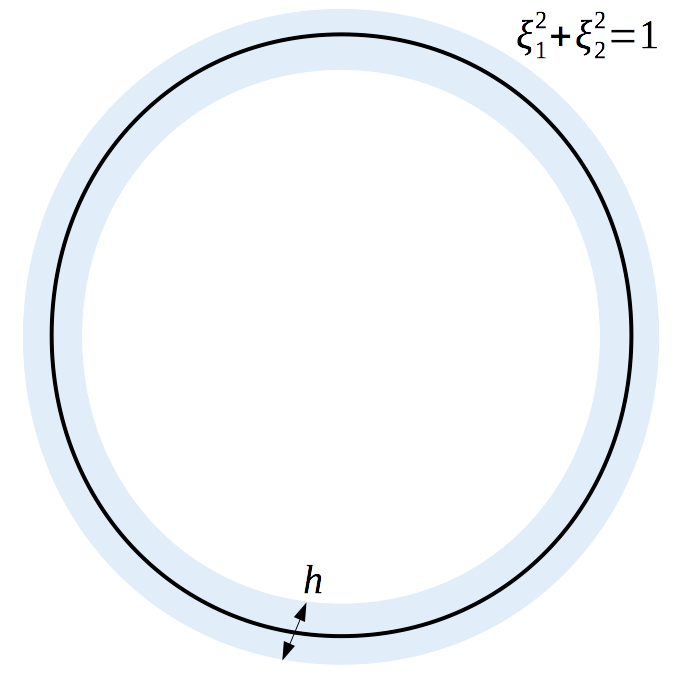}
\caption{The Fourier transform of $u$ must be located in an $O(h)$ annulus around $|\xi|=1$}
\label{fig:FTann}
\end{figure}
Now we ask, what further restrictions on the support of $\mathcal{F}_{h}[u]$ would force the $L^{\infty}$ norm of $u$ to be small? To have a large $L^{\infty}$ norm we must concentrate (about a single point) as much of the $L^{2}$ mass as possible. The uncertainty principle tells us that such intense spatial concentration is to be achieved by spreading the $L^{2}$ mass of the Fourier transform as much as possible. Conversely concentrating the mass of the Fourier transform will force $u$ to spread out, reducing the $L^{\infty}$ norm. 

Therefore to gain an improvement we need to set $p_{2}(x,\xi)$ in such a way that we force $\mathcal{F}_{h}[u]$ to be supported in a smaller region. An immediate choice is $p_{2}(x,\xi)=\xi_{2}$. Strong quasimodes to this equation require
$$\xi_{2}\mathcal{F}_{h}[u]=O_{L^{2}}(h\norm{\mathcal{F}_{h}[u]}_{L^{2}})$$
and therefore must have their Fourier transform located within a distance $h$ from the hypersurface $\xi_{2}=0$. To obey both requirements $\mathcal{F}_{h}[u]$ must be located in an $O(h)$ size ball about either $(-1,0)$ or $(1,0)$ (as shown in Figure \ref{fig:joint}). 
\begin{figure}[h!]
\includegraphics[scale=0.4]{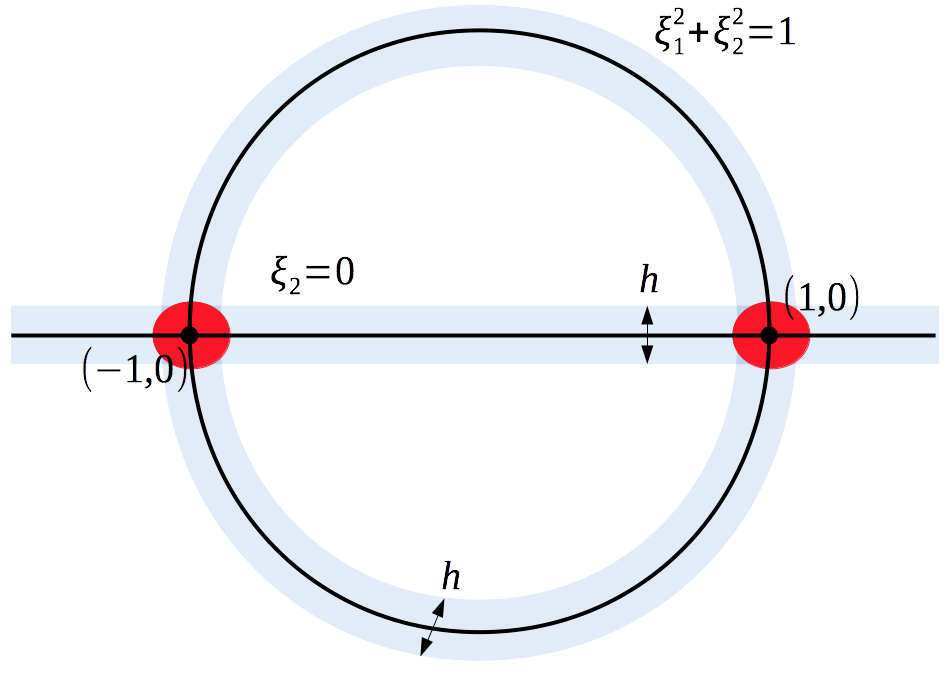}
\caption{With the additional requirement that $u$ be a strong quasimode of $p_{2}(x,hD)=hD_{x_{2}}$ we find that the Fourier transform of $u$ must be located in an $O(h)$ ball about $(-1,0)$ or $(1,0)$.}
\label{fig:joint}
\end{figure}
The uncertainty principle then tells us that $u$ will be spread across a region of size $1$, therefore its $L^{\infty}$ and $L^{2}$ norms will be comparable. 

To obtain the admissibility conditions of Theorems \ref{thm:main} and \ref{thm:n=r} consider the information provided by each quasimode equation at  a point on the intersection of  $\{\xi\mid p_{1}(\xi)=0\}$ and $\{\xi\mid p_{2}(\xi)=0\}$, for example $(1,0)$. The quasimode equation derived from $p_{1}(hD_{x})$ tells us that we may ``smear'' the mass of $\mathcal{F}_{h}[u]$ for an order $h$ region in the normal direction to $|\xi|^{2}=1$. Similarly we may ``smear'' the mass of $\mathcal{F}_{h}[u]$ for an order $h$ region in the normal direction to $\xi_{2}=0$. Since the normal vectors form a spanning set taking both requirements together restrict us to an order $h$ ball about $(1,0)$. The conditions of Theorems \ref{thm:main} and \ref{thm:n=r} generalise this by requiring the normal vectors be linearly independent thus when we add an additional $p_{i}(x,\xi)$ we add another direction in which the Fourier transform of $u$ is controlled. 

One immediate question posed by this heuristic is, what happens if the characteristic sets $\{\xi\mid p_{1}(\xi)=0\}$ and $\{\xi\mid p_{2}(\xi)=0\}$ have higher order contact? On the sphere $S^{2}$ the highest weight harmonics satisfy
$$p_{1}(x,hD)u=(h^{2}\Delta-1)u=0\quad{}p_{2}(x,hD)u=(h^{2}D^{2}_{\varphi}-h^{2}m^{2})u=0$$
where $h^{-2}=m(m+1)$. Such functions have $L^{\infty}$ norm that grows as $h^{-1/4}$. The characteristic sets in this case have order one contact. The general quantum completely integrable system lower bounds of \cite{TZ1}, \cite{TZ2} and \cite{TZ} follow a similar pattern are also associated with order one contact. If we apply the heuristic that the order of contact between characteristic sets near a point $(x_{0},\xi_{0})$ tells us the degree to which we can ``smear'' the Fourier transform this suggests that contact of order $k$ would allow a smearing of order $h^{\frac{1}{k+1}}$. This would imply an improvement (over the general $L^{\infty}$ norm bounds) of $h^{\frac{1}{2(k+1)}}$ for each direction (given by an additional operator). Certainly this is borne out in the known order one contact cases. However we leave this as a conjecture for future study.

The distinctive piecewise linear form of Sogge's $\delta(n,p)$ arises due to the curvature of $|\xi|_{g(x)}=1$. Indeed these questions regarding the growth of eigenfunctions are closely related to the classical harmonic analysis theory of the restriction operator and its adjoint and rely on the same type of curvature assumptions. It is therefore this curvature that we seek to replicate with the curvature condtion (3). In \cite{koch} their curvature condition on $p_{1}(x,hD)$ was that
\begin{itemize}
\item For each $x_{0}$ the set $\{\xi\mid{}p_{1}(x_{0},\xi)=0\}$ has nonzero Gauss curvature. 
\end{itemize}
To obtain the hypersurface estimates of \cite{tacy09} it was necessary to strengthen the second condition to
\begin{itemize}
\item For each $x_{0}$ the set $\{\xi\mid{}p_{1}(x_{0},\xi)=0\}$ has positive definite second fundamental form. 
\end{itemize}
The strengthened condition was necessary to deal with taking cross sections of $\{\xi\mid{}p_{1}(x_{0},\xi)=0\}$ and requiring those cross sections to display curvature. Similarly conditions (3) required that $\cap_{j}\{\xi\mid p(x_{0},\xi)=0\}$ display curvature. If $\{\xi\mid p_{1}(x_{0},\xi)=0\}$ is positive definite then condition (3) is met independent of the other $p_{i}(x,\xi)$. So it is sufficient to have one of the operators Laplace-like.

The admissibility conditions for the symbols $p_{j}(x,\xi)$ of Theorems \ref{thm:main} and \ref{thm:n=r} are stated in terms of symbols $p_{j}(x,\xi)$ that are independent of $h$.  There are, however, cases where we may wish to consider $h$ dependent symbols. For example the ladder operators of \cite{TZ}
$$p_{j}(x,\xi;h)=p_{j}(x,\xi)-E_{j}(h).$$
The results of this paper still hold for such symbols so long as both the symbols have uniform regularity in $h$ and the geometric admissibility conditions hold uniformly. 

\section{$L^{p}$ estimates on joint quasimodes}\label{sec:jointest}

In this section we focus on proving Theorems \ref{thm:main} and \ref{thm:n=r}. We have seen that for strong quasimodes we need only consider contributions that are semiclassically localised near points $(x_{0},\xi_{0})$ that lie in the intersections of the characteristic sets of the $p_{j}(x,\xi)$. That is we want to establish Theorem \ref{thm:localmain}.

\begin{thm}\label{thm:localmain}
Let $r<n$. Suppose $u$ is a strong joint order $h$ quasimode of $p_{1}(x,hD),\dots, p_{r}(x,hD)$ satisfying the admissibility conditions of Theorem \ref{thm:main}. Let $\chi(x,\xi)$ be a smooth compactly supported function localised near a point $(x_{0},\xi_{0})$ at which $p_{j}(x_{0},\xi_{0})=0$ for all $j$. Then
\begin{equation}\norm{\chi(x,hD)u}_{L^{p}}\lesssim{}h^{-\delta(n,p,r)}\norm{u}_{L^{2}}\label{Lpest}\end{equation}
\begin{equation}
\delta(n,p,r)=\begin{cases}
\frac{n-r}{2}-\frac{n-r+1}{p}&\frac{2(n-r+2)}{n-r}\leq{}p\leq\infty\\
\frac{n-r}{4}-\frac{n-r}{2p}&2\leq{}p\leq\frac{2(n-r+2)}{n-r}.\end{cases}\label{deltanpr}\end{equation}
\end{thm}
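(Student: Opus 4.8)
The plan is to peel off the operators $p_2(x,hD),\dots,p_r(x,hD)$ one at a time, each removal costing one dimension, until we are left with a single strong $O_{L^2}(h)$ quasimode of a Laplace-like operator in dimension $n-r+1$, to which the estimates of Koch--Tataru--Zworski \cite{koch} apply; one then checks from the explicit formulas that $\delta(n-r+1,p)=\delta(n,p,r)$, where on the left $\delta(\cdot,\cdot)$ is Sogge's exponent, so that \eqref{Lpest} follows.

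For the first step I would put $p_r(x,hD)$ into a normal form. Condition (1) says $dp_r\neq 0$ near $(x_0,\xi_0)$, so, after slightly enlarging the microlocalisation, there is a local canonical transformation straightening the characteristic hypersurface of $p_r$ and a corresponding semiclassical Fourier integral operator $T$ with $Tp_r(x,hD)T^{-1}=a_r(y,hD)\,hD_{y_n}$ modulo $O(h)$, $a_r$ elliptic; one chooses $T$ of the restricted type considered in \cite{koch}, which is bounded on every $L^p$ uniformly in $h$. Setting $v=Tu$, Egorov's theorem (Propositions \ref{prop:com}--\ref{prop:commute}) together with the argument of Proposition \ref{prop:stronqm} shows that $v$ is a semiclassically localised strong joint $O_{L^2}(h)$ quasimode of $hD_{y_n}$ and of $\tilde p_j(y,hD):=Tp_j(x,hD)T^{-1}$, $j=1,\dots,r-1$, with $\norm{v}_{L^p}\sim\norm{u}_{L^p}$ and $\norm{v}_{L^2}\sim\norm{u}_{L^2}$. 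I would then verify that $\tilde p_1,\dots,\tilde p_{r-1}$, together with the coordinate function $\eta_n$, satisfy conditions (1)--(3) at the image point: (1) and the linear independence in (2) are preserved by canonical transformations, and (3) persists because $\tilde p_1$ and $p_1$ have symplectomorphic characteristic sets.

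Next I would use the equation for $hD_{y_n}$. Iterating $\norm{(hD_{y_n})^k v}_{L^2}\lesssim h^k\norm{v}_{L^2}$ and applying Plancherel forces $v$ to be microlocalised to $\{|\eta_n|\le C_0h\}$ up to $O_{L^2}(h^\infty)$; equivalently $v$ has $O(1)$ physical frequency in $y_n$, hence over the bounded coordinate patch $v(y',\cdot)$ is, in $y_n$, comparable to a $y_n$-independent profile, so $\norm{v(y',\cdot)}_{L^p_{y_n}}\lesssim\norm{v(y',\cdot)}_{L^2_{y_n}}$ for each fixed $y'=(y_1,\dots,y_{n-1})$. Therefore $\norm{v}_{L^p(\R^n)}\lesssim\norm{V}_{L^p(\R^{n-1})}$ while $\norm{v}_{L^2(\R^n)}=\norm{V}_{L^2(\R^{n-1})}$, where $V(y'):=\norm{v(y',\cdot)}_{L^2_{y_n}}$, and $V$ (the $L^2_{y_n}$-valued profile of $v$) is a strong joint $O(h)$ quasimode of the $(n-1)$-dimensional operators obtained from $\tilde p_1,\dots,\tilde p_{r-1}$ by setting $\eta_n=0$. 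Repeating the straightening and dimension-drop $r-1$ times reduces everything to a single strong $O(h)$ quasimode, in dimension $n-r+1$, of an operator whose characteristic variety has positive definite second fundamental form, and \cite{koch} gives the claimed bound. (If only conditions (1)--(2) are assumed, the reduction instead produces a general quasimode in dimension $n-r+1$, for which the trivial semiclassical Sobolev bound already gives $\norm{\cdot}_{L^\infty}\lesssim h^{-(n-r)/2}\norm{\cdot}_{L^2}$, the $p=\infty$ case of the Remark after Theorem \ref{thm:main}.)

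I expect the main obstacle to be the normal form when the $p_j$ do not Poisson-commute: conjugating a single $p_r$ to $hD_{y_n}$ is unproblematic, but the conjugated symbols $\tilde p_1,\dots,\tilde p_{r-1}$ will in general depend on $y_n$, so ``setting $\eta_n=0$ and freezing $y_n$'' is not literally an $O(h)$ operation, and the joint characteristic variety need not be coisotropic, which obstructs flattening all of $p_2,\dots,p_r$ to coordinate hyperplanes simultaneously. I would handle this by never freezing $y_n$: it is carried along as a harmless parameter through the remaining $r-2$ straightenings, and only at the very end does one invoke the single-operator parametrix construction of \cite{koch} for the multiply-conjugated $p_1$, with the already-removed variables acting as smooth parameters --- the dispersive estimate there takes place in the $n-r+1$ genuinely curved directions and is insensitive to such parameter dependence, precisely because condition (3) was strengthened to positive definiteness so that the curvature survives restriction to those directions. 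A secondary point, inherited from \cite{koch}, is the control of the propagator past its first caustic needed at $p=\infty$ and at the critical exponent $p=2(n-r+2)/(n-r)$; as there, the quasimode (rather than exact eigenfunction) formulation together with a suitable splitting of the time integral suffices.
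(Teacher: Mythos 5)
Your overall skeleton --- peel off $p_2,\dots,p_r$, treat the removed variables as parameters, and finish with the Koch--Tataru--Zworski bound in dimension $n-r+1$, noting $\delta(n-r+1,p)=\delta(n,p,r)$ --- is the same as the paper's, but the mechanism you use to remove each operator has a genuine gap. You conjugate $p_r(x,hD)$ to (an elliptic multiple of) $hD_{y_n}$ by a semiclassical FIO $T$ and assert that $T$ can be chosen ``bounded on every $L^p$ uniformly in $h$.'' No such class is available: FIOs associated to a canonical transformation genuinely straightening $\{p_r=0\}$ are uniformly bounded only on $L^2$; on $L^p$, $p\neq2$, they in general lose a power $h^{-(n-1)\left|1/2-1/p\right|}$, and the only $L^p$-harmless conjugations (changes of variables in $x$ and multiplications by $e^{i\phi(x)/h}$) cannot map a general graph $\{\xi_r=a_r(x,\tilde{\xi}^{r})\}$ with $\tilde\xi$-dependent $a_r$ to $\{\eta_n=0\}$. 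This is precisely why \cite{koch}, and this paper, reduce by \emph{factorisation in the given coordinates}, $p_r=e_r\cdot(\xi_r-a_r(x,\tilde{\xi}^{r}))$, rather than by conjugation. Moreover, if you do drop the FIO and keep only the factorisation, your next step collapses: the strong quasimode condition then confines the semiclassical frequency to an $O(h)$ neighbourhood of the \emph{variable} graph $\xi_r=a_r(x,\tilde{\xi}^{r})$, not of a fixed hyperplane, so over the $O(1)$ support of $\chi$ the $\xi_r$-support has width comparable to the support itself, the physical frequency in $x_r$ is $\gg1$, and the Bernstein inequality $\norm{v(y',\cdot)}_{L^p_{y_n}}\lesssim\norm{v(y',\cdot)}_{L^2_{y_n}}$ is simply unavailable. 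A further incorrect step is the claim that condition (3) survives conjugation ``because the characteristic sets are symplectomorphic'': positive definiteness of the second fundamental form of the fibre slice $\{\xi\mid p_1(x_0,\xi)=0\}$ is not a symplectic invariant, since canonical transformations do not preserve the fibration over $x$; the paper instead verifies curvature persistence by direct computation after each substitution (Proposition \ref{prop:factor}).

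The paper's route around exactly these obstacles stays in physical space: after the coordinate choice and factorisation of Proposition \ref{prop:factor}, the variable $x_1$ is the time for the KTZ estimate with $x_2,\dots,x_r$ as parameters (Proposition \ref{prop:KTZ}), and the passage from $L^p$ back to $L^2$ in each parameter $x_j$, $j=2,\dots,r$, is done via the evolution equation $(hD_{x_j}-a_j(x,hD))u=E_j[u]$, Duhamel's principle and unitarity of the propagator (Proposition \ref{prop:timeslice}); this replaces your frequency-localisation/Bernstein step and requires no frequency flattening at all. Note also that this step produces iterated errors $E_{i_1}[E_{i_2}[\cdots E_{i_\alpha}u]]$ whose bound $O(h^{\alpha})\norm{u}_{L^2}$ is where the \emph{strong joint} quasimode hypothesis and the invertibility of the elliptic factors are actually consumed; your sketch does not address this bookkeeping, and it is not automatic since the various $hD_{x_j}-a_j$ do not commute.
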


We prove Theorem \ref{thm:localmain} in the following three steps.
\begin{itemize}
\item[\textbf{Step 1}] Since each characteristic set $\{\xi\mid p(x,\xi)=0\}$ is a smooth hypersurface we can write it as a locally graph. In particular, after a careful choice of coordinate system, we can write
$\{\xi\mid p_{r}(x,\xi)=0\}$ as the graph
$$\xi_{r}=a_{r}(x,\xi_{1},\dots,\xi_{r-1},\xi_{r+1},\dots,\xi_{n})$$
for some $a_{r}$. We can then ``factor'' $\xi_{r}$ out of the other $p_{j}(x,\xi)$ by substituting $a_{r}$ for $\xi_{r}$.  In Proposition \ref{prop:factor} we use this idea to define an inductive process to factor out $\xi_{2},\dots,\xi_{r}$. 
\item[\textbf{Step 2}] We are left with a semiclassical equation that does not involve derivatives in $x_{2},\dots,x_{r}$. In Proposition \ref{prop:KTZ} we treat this as a $n-r+1$ dimensional semiclassical quasimode and apply the results of Koch-Tataru-Zworski \cite{koch}. 
\item[\textbf{Step 3}] Finally we need to estimate the $L^{2}\to L^{p}$ growth for $x_{2},\dots, x_{r}$. In Proposition \ref{prop:timeslice} we do this by using each $p_{j}(x,\xi)$, $j=2,\dots,r$ to produce an evolution equation for which $x_{j}$ behaves as the time variable. 

\end{itemize}

To facilitate this process we need to introduce some notation to express the removal of various $\xi_{i}$
\begin{defn}
For $\xi\in\R^{n}$ we write
$$\tilde{\xi}^{(i)}=(\xi_{1},\dots,\xi_{i-1},\xi_{i+1},\dots,\xi_{n})\in \R^{n-1}$$
and 
\begin{align*}\tilde{\xi}^{(i,j)}&=(\xi_{1},\dots,\xi_{i-1},\xi_{j+1},\dots,\xi_{n})\in \R^{n-(j-i+1)}\quad \text{for $i<j$},\\
\tilde{\xi}^{(i,i)}&=\tilde{\xi}^{(i)}.\end{align*}
\end{defn}
A key part of the proof is Proposition \ref{prop:factor} which tells us how to factor out the variables $\xi_{2},\dots,\xi_{r}$. Before we prove the general case we will look at an explicit example with $n=3,r=2$ to fix our ideas.

\begin{example}\label{ex:factor}
Let 
$$p_{1}(x,\xi)=p_{1}(\xi)=|\xi|^{2}-1\quad\text{and}\quad p_{2}(x,\xi)=\xi_{1}+\xi_{2}-\xi_{3}+x^{2}_{2}.$$
We localise to the region near the point $x_{0}=(0,0,0),\xi_{0}=(1,0,0)$. Note that at this point $\nu_{1}=(1,0,0)$ and $\nu_{2}=(1,1,-1)$. So as long as we are suitably localised near $(x_{0},\xi_{0})$ the linear independence of $\nu_{1}$ and $\nu_{2}$ is guaranteed. Further $\{\xi\mid p_{1}(\xi)=0\}$ has positive definite second fundamental form so the curvature condition is satified. We need first to pick a good coordinate system in which to work. We have
$$\partial_{\xi_{1}}p_{1}(x_0,\xi_0)=2\quad{}\nabla_{\tilde{\xi}^{1}}p_{1}(x_{0},\xi_{0})=0$$
so we will make no changes that involve $\xi_{1}$. Now
$$\partial_{\xi_{2}}p_{2}(x_{0},\xi_{0})=1\quad{}\partial_{\xi_{3}}p_{2}(x_{0},\xi_{0})=-1$$
so we make a change of coordinates such that in the new system
$$\partial_{\xi_{2}}p_{2}(x_{0},\xi_{0})\neq 0\quad{}\partial_{\xi_{3}}p_{2}(x_{0},\xi_{0})=0.$$
For example 
$$\frac{1}{2}\left(\begin{array}{cc}
1&-1\\
1&1\end{array}\right)\left[\begin{array}{c}
\xi_{2}\\
\xi_{3}\end{array}\right]$$
is suitable and, under this change, $p_{2}(x,\xi)$ becomes
$$p_{2}(x,\xi)=\xi_{1}+2\xi_{2}+x^{2}_{2}.$$
Now we are in a position to factor out $\xi_{2}$. We write
$$p_{2}(x,\xi)=2\left(\xi_{2}+\frac{\xi_{1}+x^{2}_{2}}{2}\right)$$
so that the zero set of $p_{2}(x,\xi)$ is given by $\xi_{2}=-\frac{\xi_{1}+x^{2}_{2}}{2}$. We then produce a new symbol which we denote as $\tilde{p}_{1}^{(2)}(x,\tilde{\xi}^{(2)})$,
$$\tilde{p}_{1}^{(2)}(x,\tilde{\xi}^{(2)})=p_{1}\left(x,\xi_{1},-\frac{\xi_{1}+x^{2}_{2}}{2},\xi_{3}\right)=\frac{5}{4}\xi^{2}_{1}+\xi^{2}_{3}+\frac{x_{2}^{2}\xi_{1}}{2}+\frac{x_{2}^{4}}{4}.$$
Note that for any fixed $x$ near $x=0$, $\{\xi\mid \tilde{p}^{2}_{1}(x,\tilde{\xi}^{(2)})=0\}$ still has positive definite second fundamental form when considered as a hypersurface in $\R^{2}$. In Proposition \ref{prop:factor} we see how to move through this process in the general case.
\end{example}

\begin{prop}\label{prop:factor}
Suppose $p_{1}(x,hD),\dots,p_{r}(x,hD)$ satisfy the admissibility conditions of Theorem \ref{thm:main}. Then  for each $k=0,\dots,r-1$ there exists a set of symbols 
$$\tilde{p}^{(r-k,r)}_{i}(x,\tilde{\xi}^{(r-k,r)})\quad{} i=1,\dots,r-k-1$$
where
\begin{itemize}
\item For any $x$, 
$$\bigcap_{i=1}^{r-k-1}\{\xi\mid \tilde{p}_{i}^{(r-k,r)}(x,\tilde{\xi}^{(r-k,r)})=0\}=\bigcap_{i=1}^{r}\{\xi\mid p_{i}(x,\xi)=0\}$$
\item $\partial_{\xi_{i}}\tilde{p}^{(r-k,r)}_{i}(x_{0},\tilde{\xi}^{(r-k,r)}_{0})\neq{}0,\quad{}\partial_{\xi_{j}}\tilde{p}^{(r-k,r)}_{i}(x_{0},\tilde{\xi}^{(r-k,r)}_{0})=0 \quad j>i.$
\item For $k=0,\dots,r-2$
$$\partial^{2}_{\xi_{i}\xi_{j}}\tilde{p}^{(r-k,r)}_{1}(x_{0},\tilde{\xi}^{(r-k,r)}_{0})\quad i,j=r+1,\dots, n$$
 is a non-degenerate matrix.
\end{itemize}
\end{prop}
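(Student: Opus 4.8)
The plan is to prove Proposition \ref{prop:factor} by induction on $k$, mirroring the concrete computation of Example \ref{ex:factor}. The base case $k=0$ (equivalently, the first factorisation step) is really just a linear change of coordinates in the fibre variables: starting from the original symbols $p_1,\dots,p_r$, we first use condition (2) — linear independence of the normals $\nu_j(x_0,\xi_0)$ — to perform a linear change of coordinates on $\xi$ so that $\nabla_\xi p_r(x_0,\xi_0)$ points in the $\xi_r$ direction, and more precisely so that the triangular vanishing conditions $\partial_{\xi_i}p_i(x_0,\xi_0)\neq 0$, $\partial_{\xi_j}p_i(x_0,\xi_0)=0$ for $j>i$ hold; this is possible because the $r\times n$ matrix of gradients has rank $r$, so after a linear change we may assume it is in (reduced) row-echelon form on the first $r$ columns. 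Note this linear change is done once and for all and does not destroy the curvature hypothesis (3), since an invertible linear change of fibre coordinates preserves the property of a hypersurface having positive definite second fundamental form (up to possibly flipping orientation, which we fix by a sign).

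Next I would carry out the inductive step. Assume we have the symbols $\tilde p_i^{(r-k,r)}(x,\tilde\xi^{(r-k,r)})$, $i=1,\dots,r-k-1$, with the stated properties. By the triangular condition, $\partial_{\xi_{r-k-1}}\tilde p_{r-k-1}^{(r-k,r)}(x_0,\tilde\xi_0^{(r-k,r)})\neq 0$, so by the implicit function theorem (applied in a neighbourhood of $(x_0,\tilde\xi_0^{(r-k,r)})$) the zero set $\{\tilde p_{r-k-1}^{(r-k,r)}=0\}$ can be written as a graph
\begin{equation*}
\xi_{r-k-1}=a(x,\xi_1,\dots,\xi_{r-k-2},\xi_{r+1},\dots,\xi_n)
\end{equation*}
for a smooth function $a$. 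We then define, for $i=1,\dots,r-k-2$,
\begin{equation*}
\tilde p_i^{(r-k-1,r)}(x,\tilde\xi^{(r-k-1,r)}):=\tilde p_i^{(r-k,r)}\bigl(x,\xi_1,\dots,\xi_{r-k-2},a(x,\tilde\xi^{(r-k-1,r)}),\xi_{r+1},\dots,\xi_n\bigr),
\end{equation*}
i.e.\ we substitute the graph for $\xi_{r-k-1}$. The first bullet for the new symbols follows by the chain rule: evaluated at the base point, $\partial_{\xi_j}a(x_0,\tilde\xi_0)=0$ for all relevant $j$ (because the tangent-space/triangular conditions force the graphing function to have vanishing first derivatives at the base point — this is exactly the content of the row-echelon normalisation carried over inductively), so $\partial_{\xi_i}\tilde p_i^{(r-k-1,r)}(x_0,\cdot)=\partial_{\xi_i}\tilde p_i^{(r-k,r)}(x_0,\cdot)$ for $i\leq r-k-2$, and similarly the off-diagonal derivatives still vanish. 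The curvature bullet for the new $\tilde p_1$: since $\partial_{\xi_j}a$ vanishes at the base point for $j=r+1,\dots,n$, the Hessian of $\tilde p_1^{(r-k-1,r)}$ in the variables $\xi_{r+1},\dots,\xi_n$ at the base point equals the corresponding block of the Hessian of $\tilde p_1^{(r-k,r)}$ (the cross terms involving $\partial a$ drop out), which is positive definite by the inductive hypothesis; hence it remains positive definite. This closes the induction.

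The part requiring the most care — and which I expect to be the main obstacle — is keeping the bookkeeping straight so that the triangular vanishing conditions $\partial_{\xi_j}\tilde p_i=0$ for $j>i$ are genuinely preserved at every stage, and in particular verifying that the graphing function $a$ has vanishing $\xi$-gradient at the base point. This is what makes the chain-rule computations collapse to the clean statements above; without it, substituting the graph would introduce spurious first-order and cross terms. Concretely, one needs: (i) at each stage the normal to $\{\tilde p_{r-k-1}^{(r-k,r)}=0\}$ at the base point is parallel to the $\xi_{r-k-1}$ axis (which is exactly $\partial_{\xi_j}\tilde p_{r-k-1}^{(r-k,r)}(x_0,\cdot)=0$ for $j\neq r-k-1$, $j$ among the surviving indices), so that $\nabla a(x_0,\cdot)=0$; and (ii) that this normalisation for $\tilde p_{r-k-1}$ is inherited from the original linear change of coordinates together with the substitutions already performed — i.e.\ substituting graphs for $\xi_{r-k},\dots,\xi_r$ into $p_{r-k-1}$ does not spoil the vanishing of its $\xi_j$-derivatives for $j\neq r-k-1$, again because each of those graphing functions has vanishing gradient at the base point. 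One should also remark that the linear independence of the normals (condition (2)), which is an open condition, continues to hold on a sufficiently small neighbourhood of $(x_0,\xi_0)$, so the implicit function theorem applies uniformly and all the $\tilde p_i$ are defined on a common neighbourhood; shrinking the support of $\chi$ as needed, this is harmless.
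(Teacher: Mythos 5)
Your overall strategy is exactly the paper's: a linear change of fibre coordinates using the independence of the normals, then inductive elimination of $\xi_{r},\xi_{r-1},\dots$ by the implicit function theorem and substitution, with the chain rule verifying the two bullets. The problem is the mechanism you give for the chain-rule collapse. Your claim (i) --- that at each stage the normal to $\{\tilde{p}^{(r-k,r)}_{r-k-1}=0\}$ at the base point is parallel to the $\xi_{r-k-1}$ axis, so that the graphing function satisfies $\nabla a(x_{0})=0$ --- does not follow from the triangular (row-echelon) normalisation you actually impose. That normalisation only gives $\partial_{\xi_{j}}\tilde{p}^{(r-k,r)}_{r-k-1}(x_{0})=0$ for the surviving indices $j>r-k-1$ (i.e.\ $j=r+1,\dots,n$); it says nothing about $j<r-k-1$. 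The paper's own Example \ref{ex:factor} is a counterexample at the very first stage: after the coordinate change $p_{2}=\xi_{1}+2\xi_{2}+x_{2}^{2}$, whose $\xi$-gradient at the base point is $(1,2,0)$, not parallel to the $\xi_{2}$ axis, and the graphing function $a_{2}=-\tfrac{1}{2}(\xi_{1}+x_{2}^{2})$ has $\partial_{\xi_{1}}a_{2}=-\tfrac12\neq 0$. So the assertion that ``the graphing functions have vanishing gradient at the base point'' is false under your set-up, and it is the step your whole bookkeeping rests on.

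The conclusions you want are still true, but for a different reason, and one term in the Hessian computation is not addressed at all. In the first-order chain rule the single correction term is $\partial_{\xi_{r-k-1}}\tilde{p}^{(r-k,r)}_{i}(x_{0})\,\partial_{\xi_{j}}a(x_{0})$, and it vanishes because the \emph{first} factor is zero: for the symbols being substituted into we have $i\leq r-k-2<r-k-1$, so the inductive triangular hypothesis gives $\partial_{\xi_{r-k-1}}\tilde{p}^{(r-k,r)}_{i}(x_{0})=0$. This, not $\nabla a(x_{0})=0$, is what the paper uses. For the curvature block $i,j=r+1,\dots,n$ you correctly use $\partial_{\xi_{i}}a(x_{0})=\partial_{\xi_{j}}a(x_{0})=0$ (which does hold, since these are indices $>r-k-1$), but the chain rule also produces the term $\partial_{\xi_{r-k-1}}\tilde{p}^{(r-k,r)}_{1}(x_{0})\,\partial^{2}_{\xi_{i}\xi_{j}}a(x_{0})$; this is not a ``cross term involving $\partial a$'' and would not be killed even if $\nabla a(x_{0})=0$ --- it vanishes because $\partial_{\xi_{r-k-1}}\tilde{p}^{(r-k,r)}_{1}(x_{0})=0$ (triangular condition with $1<r-k-1$), and you never invoke this. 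You could alternatively repair your route by strengthening the initial linear change so that each $\nabla_{\xi}p_{i}(x_{0},\xi_{0})$ is a multiple of the $i$-th coordinate direction (linear independence allows this), after which full gradient vanishing of the graphing functions does propagate; but even then the $\partial^{2}a$ term must be disposed of by the vanishing of $\partial_{\xi_{r-k-1}}\tilde{p}_{1}$ at the base point. Finally, to start the induction you should, as the paper does, connect hypothesis (3) to positive definiteness of $\partial^{2}_{\xi_{i}\xi_{j}}p_{1}(x_{0},\xi_{0})$ for $i,j=r+1,\dots,n$, which uses that the gradient of $p_{1}$ has been aligned with the $\xi_{1}$ axis; ``a linear map preserves convexity'' alone does not give you that matrix statement.
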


\begin{proof}
Assume that the $p_{i}(x,\xi)$ are numbered so that the shape operator associated with  $\{\xi\mid p_{1}(x_{0},\xi)=0\}$ satisfies  the curvature condition. The linear independence of the $\nu_{j}(x_{0},\xi_{0})$  means that there is a linear map from the $\nu_{j}$ to the first $r$ basis vectors of $\R^{n}$. We us this to pick an appropriate coordinate system in which to work. Since the set $\{\xi\mid p_{1}(x_{0},\xi)=0\}$ is a hypersurface we know that
$$\nabla_{\xi}p_{1}(x_{0},\xi_{0})\neq{}0.$$
Therefore there is some $\xi_{i}$ so $\partial_{\xi_{i}}p_{1}(x_{0},\xi_{0})\neq{}0$. A suitable change of coordinate system sets this $\xi_{i}$ to $\xi_{1}$ and we have
$$\partial_{\xi_{1}}p_{1}(x_{0},\xi_{0})\neq{}0\quad\text{and}\quad{}\nabla_{\tilde{\xi}^{(1)}}p_{1}(x_{0},\xi_{0})=0.$$

We will now set $\xi_{2},\dots,\xi_{r}$ as determined by $p_{2}(x,\xi),\dots,p_{r}(x,\xi)$. Since $\{\xi\mid p_{2}(x_{0},\xi)=0\}$ is a hypersurface there is some $\xi_{i}$ so that
$$\partial_{\xi_{i}}p_{2}(x_{0},\xi_{0})\neq{}0$$
and the linear independence of the normals tells us that, $\langle \xi_{i},\tilde{\xi}^{(1)}\rangle\neq{}0$. That is we can set $\xi_{2}$ so that
$$\partial_{\xi_{2}}p_{2}(x_{0},\xi_{0})\neq{}0\quad \nabla_{\tilde{\xi}^{(1,2)}}p(x_{0},\xi_{0})=0.$$
Continuing in this fashion we have
\begin{equation}\partial_{\xi_{i}}p_{i}(x_{0},\xi_{0})\neq{}0\quad{}\nabla_{\tilde{\xi}^{(1,i)}}p(x_{0},\xi_{0})=0\label{nondeghy}\end{equation}
for  $i=1,\dots,r$. 
 
 At $(x_{0},\xi_{0})$ in this coordinate system
 $$\gamma S_{1}\gamma^{\star}=\partial^{2}_{\xi_{i}\xi_{j}}p_{1}(x_{0},\xi_{0})\quad i,j=r+1,\dots,n.$$
 So the curvature condition (3) of Theorem \ref{thm:main} implies that 
 $$\partial^{2}_{\xi_{i}\xi_{j}}p_{1}(x_{0},\xi_{0})\quad i,j=r+1,\dots,n$$ 
 is non-degenerate. By choosing the support of the localiser $\chi(x,\xi)$ small we may assume that this non-degeneracy persists on  the support of $\chi$. 

We now define a process to inductively remove each $\xi_{i}$, $i=2,\dots,r$. From \eqref{nondeghy}, using the implicit function theorem, we can write the set $\{\xi\mid p_{r}(x,\xi)=0\}$ as a graph $\xi_{r}=a_{r}(x,\tilde{\xi}^{(r)})$ and factorise  $p_{r}(x,\xi)$ as
$$p_{r}(x,\xi)=e_{r}(x,\xi)(\xi_{r}-a_{r}(x,\tilde{\xi}^{(r)}))\quad |e_{r}(x,\xi)|\geq{}c>0.$$
 We now substitute the expression $\xi_{r}=a_{r}(x,\tilde{\xi}^{(r)})$ into each of the $p_{i}(x,\xi)$ for $i=1,\dots,r-1$ and therefore produce a set of symbols $\tilde{p}^{(r)}_{i}(x,\tilde{\xi}^{(r)})$ which are independent of $\xi_{r}$ but preserve the intersection of characteristic sets. That is
  $$\tilde{p}^{(r)}_{i}(x,\tilde{\xi}^{(r)})=p_{i}(x,\xi_{1},\dots,\xi_{r-1},a_{r}(x,\tilde{\xi}^{(r)}),\xi_{r+1},\dots,\xi_{n}).$$
  Now
$$\partial_{\xi_{j}}\tilde{p}^{(r)}_{i}(x,\tilde{\xi}^{(r)})=\partial_{\xi_{j}}p_{i}(x,\tilde{\xi}^{(r)})+\partial_{\xi_{r}}p_{i}(x,\tilde{\xi}^{(r)})\partial_{\xi_{j}}a_{r}(x,\tilde{\xi}^{(r)})$$
so
$$\partial_{\xi_{j}}\tilde{p}^{(r)}_{i}(x_{0},\tilde{\xi}_{0}^{(r)})=0\quad{}j>i$$
and
$$\partial_{\xi_{i}}\tilde{p}^{(r)}_{i}(x_{0},\tilde{\xi}_{0}^{(r)})\neq{}0.$$
We now need to check that 
$$\partial_{\xi_{i}\xi_{j}}^{2}\tilde{p}_{1}^{(r)}(x_{0},\tilde{\xi}^{(r)}_{0})\quad i=r+1,\dots,n$$
is non-degenerate
\begin{multline*}\partial^{2}_{\xi_{i}\xi_{j}}\tilde{p}^{(r)}_{1}(x,\tilde{\xi}^{(r)})=\partial^{2}_{\xi_{i}\xi_{j}}p_{1}(x,\tilde{\xi}^{(r)})+\partial^{2}_{\xi_{i}\xi_{r}}p_{1}(x,\tilde{\xi}^{(r)})\partial_{\xi_{j}}a_{r}(x,\tilde{\xi}^{(r)})\\
+\partial^{2}_{\xi_{j}\xi_{r}}p_{1}(x,\tilde{\xi}^{(r)})\partial_{\xi_{i}}a_{1}(x,\tilde{\xi}^{(r)})+\partial_{\xi_{r}}p_{1}(x,\tilde{\xi}^{(r)})\partial^{2}_{\xi_{i}\xi_{j}}a_{r}(x,\tilde{\xi}^{(r)}).\end{multline*}
At $(x_{0},\xi_{0})$
$$\partial_{\xi_{j}}\tilde{p}_{r}(x_{0},\xi_{0})=e_{r}(x_{0},\xi_{0})\partial_{\xi_{j}}a(x_{0},\xi_{0}),$$
so if $j>r$ we have
$$\partial_{\xi_{j}}a(x_{0},\xi_{0})=0.$$
Therefore for $i,j>r$
$$\partial^{2}_{\xi_{i}\xi_{j}}\tilde{p}^{(r)}_{1}(x_{0},\tilde{\xi}_{0}^{(r)})=\partial^{2}_{\xi_{i}\xi_{j}}p_{1}(x_{0},\xi_{0})$$
and is non-degenerate. Again by taking a region of small support around $(x_{0},\xi_{0})$ we may assume this holds on the support of $\chi$. 

We can now repeat the process to remove $\xi_{r-1}$. Note that we have  $\partial_{\xi_{r-1}}\tilde{p}^{(r)}_{r-1}(x_{0},\xi_{0})\neq{}0$ so we write $\{\xi\mid \tilde{p}^{(r)}_{r-1}(x,\tilde{\xi}^{(r)})=0\}$ as a graph $\xi_{r-1}=a_{r-1}(x,\tilde{\xi}^{(r-1,r)})$. We can then produce $\tilde{p}^{(r-1,r)}_{i}(x,\tilde{\xi}^{(r-1,r)})$ for $i=1,\dots,r-2$ in the same fashion as we produced the $\tilde{p}^{(r)}_{i}(x,\tilde{\xi}^{(r)})$. By  continuing inductively we produce a $\tilde{p}^{(r-k,r)}_{i}(x,\tilde{\xi}^{(r-k,r)})$ as required.

\end{proof}

At the final step of the inductive process of Proposition \ref{prop:factor} we produce a $\tilde{p}^{(2,r)}_{1}(x,\tilde{\xi}^{(2,r)})$ so that
$$\partial_{\xi_{1}}\tilde{p}^{(2,r)}_{1}(x_{0},\tilde{\xi}_{0}^{(2,r)})\neq{}0$$
and the matrix
$$\frac{\partial^{2}\tilde{p}^{(2,r)}_{1}}{\partial\xi_{i}\partial\xi_{j}}\quad i,j=r+1,\dots,n$$
is non-degenerate. So a final application of the implicit function theorem tells us that there is some $b(x,\tilde{\xi}^{(1,\dots,r)})$ so that
\begin{equation}\tilde{p}^{(2,r)}_{1}(x,\tilde{\xi}^{(2,r)})=e_{1}(x,\tilde{\xi}^{(2,r)})(\xi_1-b(x,\tilde{\xi}^{(1,r)})).\label{factored}\end{equation}
For our future computations we adopt the more convenient notation that $\tilde{\xi}^{(1,r)}=\eta\in\R^{n-r}$, $x_{1}=t$ and $x=(t,y,z)$ where $z$ is dual to $\eta$. Again by writing
$$\tilde{p}^{(2,r)}_{1}(t,y,z,\xi_{1},\eta)=e_{1}(t,y,z,\xi_{1},\eta)(\xi_{1}-b(t,y,z,\eta))$$
with $|e_{1}(t,y,z,\xi_{1},\eta)|>c>0$ we see that $\partial^{2}_{\eta_{i}\eta_{j}}b$ is a non-degenerate matrix so long as $\chi$ is supported in a sufficiently small region about $(x_{0},\xi_{0})$.

\begin{prop}\label{prop:KTZ}
Let $r<n$ and $E_{1}[u]$ be the quasimode error of $\chi(x,hD)u$ with respect to $(hD_{t}-b(t,y,z,hD_{z}))$. That is
\begin{equation}E_{1}[u]=(hD_{t}-b(t,y,z,hD_{z}))u\label{evoleq}\end{equation}
and assume
$$\frac{\partial^{2}b}{\partial\eta_{i}\partial\eta_{j}}\quad\text{ is a non-degenerate matrix}$$
and
$$\nabla_{\eta}b(t_{0},y_{0},z_{0},\eta_{0})=0.$$
Then
$$\norm{u}_{L^{p}_{y}L^{p}_{t}L^{p}_{z}}\lesssim h^{-\delta(n,p,r)}\left(\norm{u}_{L^{p}_{y}L^{2}_{t}L^{2}_{z}}+h^{-1}\norm{E_{1}[u]}_{L^{p}_{y}L^{2}_{t}L^{2}_{z}}\right).$$
\end{prop}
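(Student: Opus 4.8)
The plan is to treat the variables $y=(x_{2},\dots,x_{r})$ as a parameter and to recognise the operator in \eqref{evoleq} as a semiclassical half-wave/Schr\"odinger-type operator in the remaining $n-r+1$ variables $(t,z)$, to which the $L^{p}$ bounds of Koch--Tataru--Zworski \cite{koch} apply directly. Indeed, for each fixed $y$ the operator $hD_{t}-b(t,y,z,hD_{z})$ acts on functions of $(t,z)\in\R\times\R^{n-r}$ and has principal symbol $\xi_{1}-b(t,y,z,\eta)$, whose characteristic set is the graph $\{\xi_{1}=b(t,y,z,\eta)\}$. Since $\partial^{2}_{\eta_{i}\eta_{j}}b$ is positive definite on the (small) support of $\chi$, this is a hypersurface in $\R^{n-r+1}$ with positive definite second fundamental form, hence in particular with nonvanishing Gauss curvature --- precisely the hypothesis under which \cite{koch} gives Sogge-type bounds for semiclassically localised quasimodes. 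One checks that the Sogge exponent in dimension $n-r+1$ is exactly $\delta(n,p,r)$, i.e. $\delta(n,p,r)=\delta(n-r+1,p)$, as already noted in the remarks following Theorem~\ref{thm:main}.

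The steps I would carry out are the following. \emph{(i)} Verify that for each fixed $y$ the slice $u(\cdot,y,\cdot)$ is semiclassically localised in the $(t,z)$ phase space, uniformly in $y$: this is immediate from $u=\chi(x,hD)u+O(h^{\infty})$ with $\chi$ compactly supported, since then the frequencies dual to $t$ and to $z$ are confined to a fixed compact set independently of $y$, which is all that is needed to invoke \cite{koch}. \emph{(ii)} Check that the hypotheses of \cite{koch} hold with constants uniform over $y$ in the compact projection of $\supp\chi$: $b$ is real-valued (it arises from factoring the real symbol $\tilde{p}^{(2,r)}_{1}$ as in \eqref{factored}) and smooth, so only finitely many of its derivatives and the ellipticity lower bound on its Hessian enter the constant, and these are uniform in $y$. \emph{(iii)} Apply, for each fixed $y$, the Koch--Tataru--Zworski estimate to the equation $\big(hD_{t}-b(t,y,z,hD_{z})\big)u(\cdot,y,\cdot)=E_{1}[u](\cdot,y,\cdot)$, which reads
$$\norm{u(\cdot,y,\cdot)}_{L^{p}_{t}L^{p}_{z}}\lesssim h^{-\delta(n,p,r)}\Big(\norm{u(\cdot,y,\cdot)}_{L^{2}_{t}L^{2}_{z}}+h^{-1}\norm{E_{1}[u](\cdot,y,\cdot)}_{L^{2}_{t}L^{2}_{z}}\Big).$$
Then raise this to the $p$-th power, integrate in $y$ over $\R^{r-1}$, and take $p$-th roots; using $(a+b)^{p}\lesssim a^{p}+b^{p}$ and $(A^{p}+B^{p})^{1/p}\le A+B$, the left-hand side becomes $\norm{u}_{L^{p}_{y}L^{p}_{t}L^{p}_{z}}$ and the right-hand side $h^{-\delta(n,p,r)}\big(\norm{u}_{L^{p}_{y}L^{2}_{t}L^{2}_{z}}+h^{-1}\norm{E_{1}[u]}_{L^{p}_{y}L^{2}_{t}L^{2}_{z}}\big)$, which is the assertion.

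The only genuine work lies in steps (i)--(ii): making the ``parameter-dependent'' form of \cite{koch} rigorous, that is, confirming that freezing $y$ neither destroys the semiclassical localisation nor prevents the constant from being taken uniform in $y$. I expect this to be the main obstacle, though a mild one, since it amounts to standard semiclassical bookkeeping; everything after it is the elementary Fubini/Minkowski manipulation of step (iii). I would also remark in passing that the fixed-$y$ estimate is to be used only for those $y$ with $\norm{E_{1}[u](\cdot,y,\cdot)}_{L^{2}_{t}L^{2}_{z}}<\infty$, which is almost every $y$ whenever the right-hand side of the claimed inequality is finite, so no integrability issue arises.
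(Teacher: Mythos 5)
Your proposal is correct and follows essentially the same route as the paper: freeze $y$ as a parameter, apply the Koch--Tataru--Zworski quasimode estimate in the $n-r+1$ variables $(t,z)$ (where the positive definite Hessian of $b$ supplies the curvature hypothesis and indeed $\delta(n,p,r)=\delta(n-r+1,p)$), and then take $L^{p}$ norms in $y$. The uniformity-in-$y$ and Fubini details you flag are exactly the ``standard semiclassical bookkeeping'' the paper leaves implicit.
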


\begin{proof}
Consider \eqref{evoleq} as an inhomogeneous evolution equation where $y$ acts as a parameter. That is
$$(hD_{t}-b_{y}(t,z,hD_{z}))u=E_{1}[u]$$
and $\partial^{2}_{\eta_{i}\eta_{j}}b$ is non-degenerate. This is exactly the kind of quasimode treated in Koch-Tataru-Zworski \cite{koch} (Section 5 in particular Theorem 5) with dimension equal to $n-r+1$. Applying their results for fixed $y$ we obtain
\begin{equation}\norm{u(\cdot,y,\cdot)}_{L^{p}}\lesssim{}h^{-\delta(n,p,r)}\left(\norm{u(\cdot,y,\cdot)}_{L^{2}}+h^{-1}\norm{E_{1}[u](\cdot,y,\cdot)}_{L^{2}_{t}L^{2}_{z}}\right).\label{KTZ}\end{equation}
The constant implicit in \eqref{KTZ} may depend on the size of $b_{y}(t,z,\eta)$ and its derivatives. However since we are on a compact region of phase space we can take a supremum over $y\in \text{supp}(\chi)$ and obtain \eqref{KTZ} with a uniform constant. Therefore taking $L^{p}$ norms in $y$ we obtain
$$\norm{u}_{L^{p}_{y}L^{p}_{t}L^{p}_{z}}\lesssim h^{-\delta(n,p,r)}\left(\norm{u}_{L^{p}_{y}L^{2}_{t}L^{2}_{z}}+h^{-1}\norm{E_{1}[u]}_{L^{p}_{y}L^{2}_{t}L^{2}_{z}}\right).$$
\end{proof}
 
 So to obtain Theorem \ref{thm:localmain} we need only prove that
 $$\left(\norm{u}_{L^{p}_{y}L^{2}_{t}L^{2}_{z}}+h^{-1}\norm{E_{1}[u]}_{L^{p}_{y}L^{2}_{t}L^{2}_{z}}\right)\lesssim \norm{u}_{L^{2}_{y}L^{2}_{t}L^{2}_{z}}.$$
 We divide into the cases $r=2$ and $r\geq{}3$. The ideas in each case are identical however the notation is slightly different. 
 
  \begin{prop}\label{prop:timeslice2} Suppose $r=2$. Under the assumptions of Theorem \ref{thm:localmain} and in the coordinate system developed in Proposition \ref{prop:factor}
  $$\left(\norm{u}_{L^{p}_{y}L^{2}_{t}L^{2}_{z}}+h^{-1}\norm{E_{1}[u]}_{L^{p}_{y}L^{2}_{t}L^{2}_{z}}\right)\lesssim \norm{u}_{L^{2}_{y}L^{2}_{t}L^{2}_{z}}.$$
 \end{prop}
 
 \begin{proof}
Consider $p_{2}(x,\xi)$. We can factorise the symbol
$$p_{2}(x,\xi)=e_{2}(x,\xi)(\xi_{2}-a_{2}(x,\tilde{\xi}^{(2)}))$$
with $|e_{2}(x,\xi)|>c>0$. So if $u$ is an order $h$ quasimode of $p_{2}(x,hD)$, then $u$ is also an order $h$ quasimode of
$$(hD_{x_{2}}-a_{2}(x,hD_{\tilde{x}^{(2)}})).$$
In $(t,y,z)$ coordinates 
$$(hD_{y_{1}}-a_{2}(t,y,z,hD_{t},hD_{z}))u=E_{2}[u],\quad\norm{E_{2}[u]}_{L^2}\lesssim\norm{p_{2}(x,hD)u}_{L^{2}}.$$
So as in Koch-Tataru-Zworski \cite{koch} we use Duhamel's principle. We can write
$$u=U_{2}(y_{1},0)u(t,0,z)+\frac{i}{h}\int_{0}^{y_{1}}U_{2}(y_{1},s)E_{2}[u]ds$$
where
$$\begin{cases}
(hD_{y_{1}}-a_{2}(t,y_{1},z,hD_{t},hD_{z}))U_{2}(y_{1},s)=0\\
U_{2}(s,s)=\Id.\end{cases}$$
So
$$\norm{U(\cdot,0)u|_{y_{1}=0}}_{L^{2}_{y_{1}}L^{2}_{t}L^{2}_{z}}\lesssim\left(\norm{u}_{L^{2}_{y_{1}}L^{2}_{t}L_{z}^{2}}+h^{-1}\norm{E_{2}[u]}_{L^{2}_{y_{1}}L^{2}_{t}L^{2}_{z}}\right).$$
Since $U_{2}$ is unitary 
$$\norm{u(\cdot,0,\cdot)}_{L^{2}_{t}L^{2}_{z}}\lesssim\norm{U(\cdot,0)u|_{y_{1}=0}}_{L^{2}_{y_{1}}L^{2}_{t}L^{2}_{z}}.$$
Then we may conclude that
\begin{equation}\norm{u(\cdot,0,\cdot)}_{L^{2}_{t}L^{2}_{z}}\lesssim\left(\norm{u}_{L^{2}_{y_{1}}L^{2}_{t}L_{z}^{2}}+h^{-1}\norm{E_{2}[u]}_{L^{2}_{y_{1}}L^{2}_{t}L^{2}_{z}}\right)\label{restrict}\end{equation}
and indeed by shifting the zero in $y_{1}$ \eqref{restrict} is true for any $\norm{u(\cdot,y_{1},\cdot)}_{L^{2}_{t}L^{2}_{z}}$. Therefore
$$\norm{u}_{L^{\infty}_{y_{1}}L^{2}_{t}L^{2}_{z}}\lesssim\left(\norm{u}_{L^{2}_{y_{1}}L^{2}_{t}L_{z}^{2}}+h^{-1}\norm{E_{2}[u]}_{L^{2}_{y_{1}}L^{2}_{t}L^{2}_{z}}\right).$$
and since $y_{1}$ lies in a compact set,
$$\norm{u}_{L^{p}_{y_{1}}L^{2}_{t}L^{2}_{z}}\lesssim\left(\norm{u}_{L^{2}_{y_{1}}L^{2}_{t}L_{z}^{2}}+h^{-1}\norm{E_{2}[u]}_{L^{2}_{y_{1}}L^{2}_{t}L^{2}_{z}}\right).$$
By treating $E_{1}[u]$ itself as a quasimode we also have
$$\norm{E_{1}[u]}_{L^{p}_{y_{1}}L^{2}_{t}L^{2}_{z}}\lesssim\left(\norm{E_{1}[u]}_{L^{2}_{y_{1}}L^{2}_{t}L_{z}^{2}}+h^{-1}\norm{E_{2}\left[E_{1}[u]\right]}_{L^{2}_{y_{1}}L^{2}_{t}L^{2}_{z}}\right).$$
 So we need only show that 
 $$\norm{E_{2}[u]}_{L^{2}}\leq{}h\norm{u}_{L^{2}}$$
$$\norm{E_{1}[u]}_{L^{2}}\leq{}h\norm{u}_{L^{2}}$$
and
$$\norm{E_{2}\left[E_{1}[u]\right]}_{L^{2}}\leq{}h^{2}\norm{u}_{L^{2}}.$$
The first inequality follows from $u$ being an order $h$ quasimode of $p_{2}(x,hD)$ and the fact that we can write
$$p_{2}(x,\xi)=e_{2}(x,\xi)(\xi_{2}-a_{2}(x,\tilde{\xi}^{(2)})$$
where $|e_{2}(x,\xi)|>c>0$.  The second two inequalities follow from the claim that if $u$ is a strong joint order $h$ quasimode of $q(x,hD),p(x,hD)$ and $(hD_{x_{2}}-a(x,hD_{\tilde{x}^{(2)}}))$ that $u$ is a strong joint order $h$ quasimode of $q(x,hD)$ and $\tilde{p}^{(2)}(x,hD_{\tilde{x}^{(2)}})$ where
$$\tilde{p}^{(2)}(x,\tilde{\xi}^{(2)})=p(x,\xi_{1},a(x,\tilde{\xi}^{(2)}),\xi_{i+1},\dots,\xi_{n}).$$
Consider the difference $p(x,\xi)-\tilde{p}^{(2)}_{1}(x,\tilde{\xi}^{(2)})$ and expand in $\xi_{2}$ about $a(x,\tilde{\xi}^{(2)})$. We obtain
$$p(x,\xi)-\tilde{p}^{(2)}(x,\tilde{\xi}^{(2)})=(\xi_{2}-a(x,\tilde{\xi}^{(2)}))r(x,\xi).$$
So
$$q^{k_{1}}(x,hD)(\tilde{p}^{(2)}(x,hD_{\tilde{x}^{(2)}}))^{k_{2}}=q^{k_{1}}(x,hD)(p(x,hD)-(hD_{x_{2}}-a(x,hD_{\tilde{x}^{(2)}}))r(x,hD))^{k_{2}}$$
and so expanding via the binomial formula
$$\norm{q^{k_{1}}(x,hD)(\tilde{p}^{(2)}(x,hD))^{k_{2}}u}\lesssim h^{k_{1}+k_{2}}\norm{u}_{L^{2}}.$$
Therefore since $u$ is a strong joint order $h$ quasimode of $p_{1}(x,hD)$ and $p_{2}(x,hD)$ we know that
$$\norm{\tilde{p}^{(2)}_{1}(x,hD_{\tilde{x}^{(2)}})\circ p_{2}(x,hD)u}_{L^{2}}\lesssim{}h^{2}\norm{u}_{L^{2}}$$
and
$$\norm{\tilde{p}^{(2)}_{1}(x,hD_{\tilde{x}^{(2)}})u}_{L^{2}}\lesssim h\norm{u}_{L^{2}}.$$
However
$$E_{i}[v]=(hD_{x_{i}}-a_{i}(x,hD_{\tilde{x}^{(i)}}))v\quad{}i=1,2$$
where
$$p_{i}(x,\xi)=e_{i}(x,\xi)(\xi_{i}-a_{i}(x,\tilde{\xi}^{(i)}))$$
and $|e_{i}(x,\xi)|>c>0$ so by the invertibility of the $e_{i}(x,hD)$ we obtain
$$\norm{E_{1}[u]}_{L^{2}}\leq{}h\norm{u}_{L^{2}}$$
and
$$\norm{E_{2}\left[E_{1}[u]\right]}_{L^{2}}\leq{}h^{2}\norm{u}_{L^{2}}.$$

\end{proof}

 \begin{prop}\label{prop:timeslice} Suppose $r\geq{} 3$. Under the assumptions of Theorem \ref{thm:localmain} and in the coordinate system developed in Proposition \ref{prop:factor}
  $$\left(\norm{u}_{L^{p}_{y}L^{2}_{t}L^{2}_{z}}+h^{-1}\norm{E_{1}[u]}_{L^{p}_{y}L^{2}_{t}L^{2}_{z}}\right)\lesssim \norm{u}_{L^{2}_{y}L^{2}_{t}L^{2}_{z}}.$$
 \end{prop}
 
 \begin{proof}
 We proceed as in the proof of Proposition \ref{prop:timeslice2} however this time we successively  use the evolution equations associated with $\tilde{p}_{i}^{(i+1,r)}(x,hD_{\tilde{x}^{(i+1,r)}})$ to estimate out the $L^{p}$ norm in each $y_{i}$. We begin with $\tilde{p}^{(3,r)}_{2}(x,\tilde{\xi}^{(3,r)})$ and factorise as
$$\tilde{p}^{(3,r)}_{2}(x,\tilde{\xi}^{(3,r)})=e_{2}(x,\tilde{\xi}^{(3,r)})(\xi_{2}-a_{2}(x,\tilde{\xi}^{(2,r)}))$$
with $|e_{2}(x,\tilde{\xi}^{(3,r)})|>c>0$. Therefore in $(t,y,z)$ coordinates we have
$$(hD_{y_{1}}-a_{2}(t,y,z,hD_{t},hD_{z}))u=E_{2}[u],\quad\norm{E_{2}[u]}_{L^2}\lesssim\norm{\tilde{p}^{(3,r)}_{2}(x,hD_{\tilde{x}^{(3,r)}})u}_{L^{2}}.$$
So as in Koch-Tataru-Zworski \cite{koch} we use Duhamel's principle. We can write
$$u=U_{2}(y_{1},0)u(t,0,\tilde{y}^{(1)},z)+\frac{i}{h}\int_{0}^{y_{1}}U_{2}(y_{1},s)E_{2}[u]ds$$
where
$$\begin{cases}
(hD_{y_{1}}-a_{2}(t,y,z,hD_{t},hD_{z}))U_{2}(y_{1},s)=0\\
U_{2}(s,s)=\Id.\end{cases}$$
Note that $a_{2}(t,y,z,hD_{t},hD_{z}))$ has no derivatives in $y_{2},\dots,y_{r-1}$ so again we treat these as parameters. Again the unitarity of $U_{2}$ gives us that
$$ \norm{u(\cdot,0,\tilde{y}^{(1)},\cdot)}_{L^{2}_{t}L^{2}_{z}}\lesssim\left(\norm{u(\cdot,\cdot,\tilde{y}^{(1)},\cdot)}_{L^{2}_{y_{1}}L^{2}_{t}L_{z}^{2}}+h^{-1}\norm{E_{2}[u](\cdot,\cdot,\tilde{y}^{(1)},\cdot)}_{L^{2}_{y_{1}}L^{2}_{t}L^{2}_{z}}\right).$$
So by shifting the zero in $y_{1}$ and the fact that we are in a compact set inside phase space
$$\norm{u(\cdot,\cdot,\tilde{y}^{(1)},\cdot)}_{L^{p}_{y_{1}}L^{2}_{t}L^{2}_{z}}\lesssim\left(\norm{u(\cdot,\cdot,\tilde{y}^{(1)},\cdot)}_{L^{2}_{y_{1}}L^{2}_{t}L_{z}^{2}}+h^{-1}\norm{E_{2}[u](\cdot,\cdot,\tilde{y}^{(1)},\cdot)}_{L^{2}_{y_{1}}L^{2}_{t}L^{2}_{z}}\right).$$
By treating $E_{1}[u]$ itself as a quasimode we also have
$$\norm{E_{1}[u](\cdot,\cdot,\tilde{y}^{(1)},\cdot)}_{L^{p}_{y_{1}}L^{2}_{t}L^{2}_{z}}\lesssim\left(\norm{E_{1}[u](\cdot,\cdot,\tilde{y}^{(1)},\cdot)}_{L^{2}_{y_{1}}L^{2}_{t}L_{z}^{2}}+h^{-1}\norm{E_{2}\left[E_{1}[u]\right](\cdot,\cdot,\tilde{y}^{(1)},\cdot)}_{L^{2}_{y_{1}}L^{2}_{t}L^{2}_{z}}\right).$$
So taking $L^{p}$ norms in $\tilde{y}^{(1)}$ we obtain
\begin{multline*}
\left(\norm{u}_{L^{p}_{y}L^{2}_{t}L^{2}_{z}}+h^{-1}\norm{E_{1}[u]}_{L^{p}_{y}L^{2}_{t}L^{2}_{z}}\right)\lesssim{}\Big(\norm{u}_{L^{p}_{\tilde{y}^{(1)}}L^{2}_{y_{1}}L^{2}_{t}L^{2}_{z}}+h^{-1}\norm{E_{1}[u]}_{L^{p}_{\tilde{y}^{(1)}}L^{2}_{y_{1}}L^{2}_{t}L^{2}_{z}}\\
+h^{-1}\norm{E_{2}[u]}_{L^{p}_{\tilde{y}^{(1)}}L^{2}_{y_{1}}L^{2}_{t}L^{2}_{z}}+h^{-2}\norm{E_{2}\left[E_{1}[u]\right]}_{L^{p}_{\tilde{y}^{(1)}}L^{2}_{y_{1}}L^{2}_{t}L^{2}_{z}}\Big).\end{multline*}
By repeating this process for
$$\tilde{p}^{(4,r)}_{3}(x,\tilde{\xi}^{(4,r)}),\dots,p_{r}(x,\xi)$$
we obtain
$$\left(\norm{u}_{L^{p}_{y}L^{2}_{t}L^{2}_{z}}+h^{-1}\norm{E_{1}[u]}_{L^{p}_{y}L^{2}_{t}L^{2}_{z}}\right)\lesssim \sum_{\alpha=0}^{r}\sum_{\boldsymbol{i}\in I_{\alpha}}h^{-\alpha}\norm{E_{i_{1}}\left[E_{i_{2}}\cdots\left[E_{i_{\alpha}}u\right]\right]}_{L^{2}}$$
where $I_{\alpha}=\{\boldsymbol{i}=(i_{1},\dots,i_{\alpha})\mid 1\leq{}i_{k}\leq{}r, i_{k+1}<i_{k}\}$. So we need only show that 
$$\norm{E_{i_{1}}\left[E_{i_{2}}\cdots\left[E_{i_{\alpha}}u\right]\right]}_{L^{2}}\lesssim h^{\alpha}\norm{u}_{L^{2}}.$$

The reasoning applied in the proof of Proposition \ref{prop:timeslice2} tells us that if $u$ is a strong joint order $h$ quasimode of $q(x,hD),p(x,hD)$ and $(hD_{x_{i}}-a(x,hD_{\tilde{x}^{(i)}}))$ it is also strong joint order $h$ quasimode of $q(x,hD)$ and $\tilde{p}^{(i)}(x,hD_{\tilde{x}^{(i)}})$ where
$$\tilde{p}^{(i)}(x,\tilde{\xi}^{(i)})=p(x,\xi_{1},\dots,\xi_{i-1},a(x,\tilde{\xi}^{(i)}),\xi_{i+1},\dots,\xi_{n}).$$
Applying this statement inductively following the process defined in the proof of Proposition \ref{prop:factor} we see that since $u$ is a strong joint order $h$ quasimode of $p_{i_{1}}(x,hD),\dots,p_{i_{\alpha}}(x,hD)$ 
$$\norm{\tilde{p}^{(i_{1}+1,r)}_{i_{1}}(x,hD_{\tilde{x}^{(i_{1}+1,r)}})\circ \cdots \circ \tilde{p}^{(i_{\alpha}+1,r)}_{i_{\alpha}}(x,hD)u}_{L^{2}}\lesssim{}h^{\alpha}\norm{u}_{L^{2}}.$$
On the other hand
$$E_{i}[v]=(hD_{x_{i}}-a_{i}(x,hD_{\tilde{x}^{(i,r)}}))v\quad i=1,\dots,r$$
where
$$\tilde{p}^{(i+1,r)}_{i}(x,\tilde{\xi}^{(i+1,r)})=e_{i}(x,\tilde{\xi}^{(i+1,r)})(\xi_{i}-a_{i}(x,\tilde{\xi}^{(i,r)}))$$
and $|e_{i}(x,\tilde{\xi}^{(i+1,r)})|>c>0$. So by the invertibility of the $e_{i}(x,hD_{\tilde{x}^{(i+1,r)}})$ we obtain
$$\norm{E_{i_{1}}\left[E_{i_{2}}\cdots\left[E_{i_{\alpha}}u\right]\right]}_{L^{2}}\lesssim h^{\alpha}\norm{u}_{L^{2}}.$$

\end{proof}

It only remains to treat the $n=r$ case. This is just an application of the same arguments as in  Propositions \ref{prop:timeslice2} and \ref{prop:timeslice}

\begin{thm}
\label{prop:localn=r}
Suppose $u$ is a strong joint order $h$ quasimodes of $p_{1}(x,hD),\dots,p_{n}(x.hD)$ satisfying the admissibility conditions of Theorem \ref{thm:n=r}. Let $\chi(x,\xi)$ be a smooth compactly supported functions localised near a point $(x_{0},\xi_{0})$ at which $p_{j}(x_{0},\xi_{0})=0$ for all $j$. Then
$$\norm{\chi(x,hD)u}_{L^{p}}\lesssim\norm{u}_{L^{2}}$$
for every $2\leq{}p\leq{}\infty$. 

\end{thm}

\begin{proof}
We run the same argument as in Proposition \ref{prop:timeslice} but begin with $\tilde{p}^{(2,n)}_{1}(x,\tilde{\xi}^{(2,n)})$. That is we factorise
$$\tilde{p}^{(2,n)}_{1}(x,\tilde{\xi}^{(2,n)})=e_{1}(x,\tilde{\xi}^{(2,n)})(\xi_{1}-a_{1}(x))$$
with $|e_{1}(x,\tilde{\xi}^{(2,n)})|\geq{}0$. Adopting the $(t,y)$ notation (since $r=n$ there is no $z$ coordinates) we write
$$u=e^{\frac{i}{h}\rho(t,0,y)}u(0,y)+\frac{i}{h}\int_{0}^{t}e^{\frac{i}{h}\rho(t,s,y)}E_{1}[u]ds$$
where
$$E_{1}[u]=(hD_{t}-a_{1}(t,y))u$$
and
$$\rho(t,s,y)=\int_{s}^{t}a_{1}(\tau,y)d\tau.$$
Then as in the proof of Proposition \ref{prop:timeslice}
$$\norm{u(\cdot,y)}_{L^{p}_{t}}\lesssim\left(\norm{u(\cdot,y)}_{L^{2}_{t}}+h^{-1}\norm{E_{1}[u](\cdot,y)}_{L^{2}_{t}}\right)$$
so
$$\norm{u}_{L^{p}}\lesssim\left(\norm{u}_{L^{p}_{y}L^{2}_{t}}+h^{-1}\norm{E_{1}[u]}_{L^{p}_{y}L^{2}_{t}}\right).$$
Now applying Proposition \ref{prop:timeslice}
$$\norm{u}_{L^{p}}\lesssim{}\norm{u}_{L^{2}}.$$
\end{proof}

\section*{Acknowledgements}
The author would like to acknowledge Suresh Eswarathasan for his suggestions on the exposition of this paper and Simon Marshall for explaining the connection to sub-convex bounds from number theory. The author would also like to acknowledge the reviewer for all their comments, particularly the suggestion to weaken the curvature condition from the original version of the paper. 
\bibliographystyle{plain}
\bibliography{references}
\end{document}